\def\@abssec#1{\vspace{.05in}\footnotesize \parindent .2in
{\bf #1. }\ignorespaces}
\newtheorem{theorem}{Theorem}[section]
\newtheorem{proposition}[theorem]{Proposition}
\newtheorem{remark}[theorem]{Remark}
\def \Rm {\mathbb R}
\def \Sm {\mathbb S}
\newcommand{\eps}{\varepsilon}
\newcommand{\dsum}{\displaystyle\sum}
\newcommand{\dint}{\displaystyle\int}
\newcommand{\pdr}[2]{\dfrac{\partial{#1}}{\partial{#2}}}
\newcommand{\pdrr}[2]{\dfrac{\partial^2{#1}}{\partial{#2}^2}}
\newcommand{\aver}[1]{\langle {#1} \rangle}
\newcommand{\be}{\mathbf e}
\newcommand{\bp}{\mathbf p} \newcommand{\bq}{\mathbf q}
 \newcommand{\bv}{\mathbf v}
\newcommand{\bw}{\mathbf w}
\newcommand{\cout}[1]{}
\newcommand{\cO}{{\mathcal O}}
\newcommand{\mg}{{\mathfrak g}}
\newcommand{\mh}{{\mathfrak h}}
\newcommand{\mk}{{\mathfrak k}}
 \renewcommand{\arraystretch}{1.5}
\title{Cauchy problem for Ultrasound Modulated EIT}
\author{Guillaume Bal \thanks{Department of Applied Physics and 
        Applied Mathematics, Columbia University, 
        New York NY, 10027; gb2030@columbia.edu}}
\begin{document}
 
\maketitle

%\tableofcontents

\begin{abstract}
  Ultrasound modulation of electrical or optical properties of materials offers the possibility to devise hybrid imaging techniques that combine the high electrical or optical contrast observed in many settings of interest with the high resolution of ultrasound. Mathematically, these modalities require that we reconstruct a diffusion coefficient $\sigma(x)$ for $x\in X$, a bounded domain in $\Rm^n$, from knowledge of $\sigma(x)|\nabla u|^2(x)$ for $x\in X$, where $u$ is the solution to the elliptic equation $-\nabla\cdot\sigma\nabla u=0$ in $X$ with $u=f$ on $\partial X$. 
  
This inverse problem may be recast as a nonlinear equation, which formally takes the form of a $0$-Laplacian. Whereas $p-$Laplacians with $p>1$ are well-studied variational elliptic non-linear equations, $p=1$ is a limiting case with a convex but not strictly convex functional, and the case $p<1$ admits a variational formulation with a functional that is not convex. In this paper, we augment the equation for the $0$-Laplacian with full Cauchy data at the domain's boundary, which results in a, formally overdetermined, nonlinear  hyperbolic equation. 

The paper presents existence, uniqueness, and stability results for the Cauchy problem of the $0$-Laplacian. In general, the diffusion coefficient $\sigma(x)$ can be stably reconstructed only on a subset of $X$ described as the domain of influence of the space-like part of the boundary $\partial X$ for an appropriate Lorentzian metric. Global reconstructions for specific geometries or based on the construction of appropriate complex geometric optics solutions are also analyzed.
\end{abstract}
 
%\begin{AMS}
%  35R30  35L72 35B38  35C99
%\end{AMS}

\renewcommand{\thefootnote}{\fnsymbol{footnote}}
\renewcommand{\thefootnote}{\arabic{footnote}}

\renewcommand{\arraystretch}{1.1}

%\begin{keywords}
%\end{keywords}

%\begin{AMS}
%\end{AMS}

%\pagestyle{myheadings}
%\thispagestyle{plain}

%%%%%%%%%%%%%%%%%%%%%%
%%% BEGINNING TEXT %%%
%%%%%%%%%%%%%%%%%%%%%%

%%%%%%%%%%%%%%%%%%%%%
\section{Introduction}

Electrical Impedance Tomography (EIT) and Optical Tomography (OT) are medical imaging modalities that take advantage of the high electrical and optical contrast exhibited by different tissues, and in particular the high contrast often observed between healthy and non-healthy tissues. Electrical potentials and photon densities are modeled in such applications by a diffusion equation, which is known not to propagate singularities, and as a consequence the reconstruction of the diffusion coefficient in such modalities often comes with a poor resolution \cite{AS-IP-10,B-IP-09,U-IP-09}.

Ultrasound modulations have been proposed as a means to combine the high contrast of EIT and OT with the high resolution of ultrasonic waves propagating in an essentially homogeneous medium  \cite{W-JDM-04}. In the setting of EIT, ultrasound modulated electrical impedance tomography (UMEIT), also called acousto-electric tomography, has been proposed and analyzed in \cite{ABCTF-SIAP-08,BBMT-11,CFGK-SJIS-09,GS-SIAP-09,KK-AET-11,ZW-SPIE-04}. In the setting of optical tomography, a similar model of ultrasound modulated tomography (UMOT), also called acousto-optic tomography, has been derived in \cite{BS-PRL-10} in the so-called incoherent regime of wave propagation, while a large physical literature deals with the coherent regime \cite{AFRBG-OL-05,KLZZ-JOSA-97,W-JDM-04}, whose mathematical structure is quite different.

%%%%%%%%%%%%%%%%%%%%%
\paragraph{Elliptic forward problem.}
%%%%%%%%%%%%%%%%%%%%%
In the setting considered in this paper, both UMEIT and UMOT are inverse problems aiming to reconstruct an unknown coefficient $\sigma(x)$ from knowledge of a functional of the form $H(x)=\sigma(x)|\nabla u|^2(x)$, where $u(x)$ is the solution to the elliptic equation:
\begin{equation}
\label{eq:elliptic}
  -\nabla\cdot \sigma(x)\nabla u = 0\quad \mbox{ in } X, \qquad u=f\quad \mbox{ on } \partial X.
\end{equation}
Here,  $X$ is an open bounded domain in $\Rm^n$ with spatial dimension $n\geq2$. We denote by $\partial X$ the boundary of $X$ and by $f(x)$ the Dirichlet boundary conditions prescribed in the physical experiments. Neumann or more general Robin boundary conditions could be analyzed similarly. We assume that the unknown diffusion coefficient $\sigma$  is a real-valued, scalar, function defined on $X$. It is bounded above and below by positive constants and assumed to be (sufficiently) smooth. The coefficient $\sigma(x)$ models the electrical conductivity in the setting of electrical impedance tomography (EIT) and the diffusion coefficient of particles (photons) in the setting of optical tomography (OT). Both EIT and OT are high contrast modalities. We focus on the EIT setting here for concreteness and refer to $\sigma$ as the conductivity.

The derivation of such functionals $H(x)$ from physical experiments, following similar derivations in \cite{BBMT-11,BS-PRL-10,KK-AET-11}, is recalled in section \ref{sec:deriv}. For a derivation based on the focusing of acoustic pulses (in the time domain), we refer the reader to \cite{ABCTF-SIAP-08}. This problem has been considered numerically in \cite{ABCTF-SIAP-08,GS-SIAP-09,KK-AET-11}. In those papers, it is shown numerically that UMEIT allows for high-resolution reconstructions although typically more information than one measurement of the form $H(x)=\sigma(x)|\nabla u|^2(x)$ is required.

Following the methodology in \cite{CFGK-SJIS-09} where the two dimensional setting is analyzed, \cite{BBMT-11} analyzes the reconstruction of $\sigma$ in UMEIT from multiple measurements at least equal to the spatial dimension $n$. The stability estimates obtained in \cite{BBMT-11} show that the reconstructions in UMEIT are indeed very stable with respect to perturbations of the available measurements. Such results are confirmed by the theoretical investigations in a linearized setting and the numerical simulations proposed in \cite{KK-AET-11}. In this paper, we consider the setting where a unique measurement $H(x)=\sigma(x)|\nabla u|^2(x)$ is available.

\paragraph{The inverse problem as a $p-$Laplacian.}
Following \cite{ABCTF-SIAP-08,BS-PRL-10,GS-SIAP-09}, we recast the inverse problem in UMEIT as a nonlinear partial differential equation; see \eqref{eq:Cauchy1} below. This equation is formally an extension to the case $p=0$ of the $p-$Laplacian elliptic equations
\begin{displaymath} 
   -\nabla\cdot \dfrac{H(x)}{|\nabla u|^{2-p}} \nabla u=0, 
\end{displaymath}
posed on a bounded, smooth, open domain $X\subset\Rm^n$, $n\geq2$, with prescribed Dirichlet conditions, say. When $1<p<\infty$, the above problem is known to admit a variational formulation with convex functional $J[\nabla u]=\int_X H(x) |\nabla u|^{p}(x) dx$, which admits a unique minimizer, in an appropriate functional setting, solution of the above associated Euler-Lagrange equation \cite{evans}.

The case $p=1$ is a critical case as the above functional remains convex but not strictly convex. Solutions are no longer unique in general. This problem has been extensively analyzed in the context of EIT perturbed by magnetic fields (CDII and MREIT) \cite{KWYS-IEEE-02,NTT-IP-07,NTT-IP-09}, where it is shown that slight modifications of the $1-$Laplacian admit unique solutions in the setting of interest in MREIT. Of interest for this paper is the remark that the reconstruction when $p=1$ exhibits some locality, in the sense that local perturbations of the source and boundary conditions of the $1-$Laplacian do not influence the solution on the whole domain $X$. This behavior is characteristic of a transition from an elliptic equation when $p>1$ to a hyperbolic equation when $p<1$.

\paragraph{The inverse problem as a hyperbolic nonlinear equation.}

When $p<1$, the above functional $J[\nabla u]$ is no longer convex. When $p=0$, it should formally be replaced by $J[\nabla u]=\int_X H(x)\ln|\nabla u| (x)dx$, whose Euler-Lagrange equation is indeed \eqref{eq:Cauchy1} below. The resulting $0-$Laplacian is not an elliptic problem. As we mentioned above, it should be interpreted as a hyperbolic equation as the derivation of \eqref{eq:Cauchy2} below indicates. 

Information then propagates in a local fashion. Moreover, compatible boundary conditions need to be imposed in order for the hyperbolic equation to be well-posed \cite{H-SP-97,Taylor-PDE-1}. We thus augment the nonlinear equation with Cauchy boundary measurements, i.e., Dirichlet and Neumann boundary conditions simultaneously. As we shall see in the derivation of UMEIT in the next section, imposing such boundary conditions essentially amounts to assuming that $\sigma(x)$ is known at the domain's boundary. This results in an overdetermined problem in the same sense as a wave equation with Cauchy data at time $t=0$ and at time $t=T>0$ is overdetermined. Existence results are therefore only available in a local sense. We are primarily interested in showing a uniqueness (injectivity) result, which states that at most one coefficient $\sigma$ is compatible with a given set of measurements, and a stability result, which characterizes how errors in measurements translate into errors in reconstructions. Redundant measurements then clearly help in such analyses.

\paragraph{Space-like versus time-like boundary subsets.} Once UMEIT is recast as a hyperbolic problem,  we face several difficulties. The equation is hyperbolic in the sense that one of the spatial variables plays the usual role of ``time'' in a second-order wave equation. Such a ``time'' variable has an orientation that depends on position $x$ in $X$ and also on the solution of the hyperbolic equation itself since the equation is nonlinear. Existence and uniqueness results for such equations need to be established, and we shall do so in sections \ref{sec:local} and \ref{sec:global} below adapting known results on linear and nonlinear hyperbolic equations that are summarized in \cite{H-SP-97,Taylor-PDE-1}.

More damaging for the purpose of UMEIT and UMOT is the fact that hyperbolic equations propagate information in a stable fashion only when such information enters through a space-like surface, i.e., a surface that is more orthogonal than it is tangent to the direction of ``time''.  In two dimensions of space, the time-like and space-like variables can be interchanged so that when $n=2$, unwanted singularities can propagate inside the domain only through points with ``null-like'' normal vector, and in most settings, such points have (surface Lebesgue) zero-measure. In $n=2$, it is therefore expected that spurious instabilities may propagate along a finite number of geodesics and that the reconstructions will be stable otherwise. 

In dimensions $n\geq3$, however, a large part of the boundary $\partial X$ will in general be purely ``time-like'' so that the information available on such a part of the surface cannot be used to solve the inverse problem in a stable manner \cite{H-SP-97}. Only on the domain of influence of the space-like part of the boundary do we expect to stably solve the nonlinear hyperbolic equation, and hence reconstruct the unknown conductivity $\sigma(x)$. 

\paragraph{Special geometries and special boundary conditions.} As we mentioned earlier, the partial reconstruction results described above can be improved in the setting of multiple measurements. Once several measurements, and hence several potential ``time-like'' directions are available, it becomes more likely that $\sigma$ can be reconstructed on the whole domain $X$. In the setting of well-chosen multiple measurements, the theories developed in \cite{BBMT-11,CFGK-SJIS-09} indeed show that $\sigma$ can be uniquely and stably reconstructed on $X$. 

An alternative solution is to devise geometries of $X$ and of the boundary conditions that guarantee that the ``time-like'' part of the boundary $\partial X$ is empty. Information can then be propagated uniquely and stably throughout the domain. In section \ref{sec:global}, we consider several such geometries. The first geometry consists of using an annulus-shaped domain and to ensure that the two connected components of the boundary are level-sets of the solution $u$. In such situations, the whole boundary $\partial X$ turns out to be ``space-like''. Moreover, so long as $u$ does not have any critical point, we can show that the reconstruction can be stably performed on the whole domain $X$. 

Unfortunately, only in dimension $n=2$ can we be sure that $u$ does not have any critical point independent of the unknown conductivity $\sigma$. This is based on the fact that critical points in a two-dimensional elliptic equation are necessary isolated as used e.g. in \cite{A-AMPA-86} and our geometry simply prevents their existence. In three dimensions of space, however, critical points can arise. Such results are similar to those obtained in \cite{BMN-ARMA-04} in the context of homogenization theory and are consistent with the analysis of critical points in elliptic equations as in, e.g., \cite{CF-JDE-85,HaHoHoNa-JDG99}.

In dimension $n\geq3$, we thus need to use another strategy to ensure that one vector field is always available for us to penetrate information inside the domain in a unique and stable manner. In this paper, such a result is obtained by means of boundary conditions $f$ in \eqref{eq:elliptic} that are ``close'' to traces of appropriate complex geometric optics (CGO) solutions, which can be constructed provided that $\sigma(x)$ is sufficiently smooth. The CGO solutions are used to obtain required qualitative properties of the solutions to linear elliptic equations as it was done in the setting of other hybrid medical imaging modalities in, e.g., \cite{BR-IP-11,BRUZ-IP-11,BU-IP-10,T-IP-10}; see also the review paper \cite{B-IO-12}.

\medskip

The rest of the paper is structured as follows. Section \ref{sec:deriv} presents the derivation of the functional $H(x)=\sigma(x)|\nabla u|^2$ from ultrasound modulation of a domain of interest and the transformation of the inverse problem as a nonlinear hyperbolic equation. In section \ref{sec:local}, local results of uniqueness and stability are presented adapting results on linear hyperbolic equations summarized in \cite{Taylor-PDE-1}. These results show that UMEIT and UMOT are indeed much more stable modalities than EIT and OT. The section concludes with a local reconstruction algorithm, which shows that the nonlinear equation admits a solution even if the available data are slightly perturbed by, e.g., noise. The existence result is obtained after an appropriate change of variables from the result for time-dependent second-order nonlinear hyperbolic equations in \cite{H-SP-97}. Finally, in section \ref{sec:global}, we present global uniqueness and stability results for UMEIT for specific geometries or specific boundary conditions constructed by means of CGO solutions.

%
%%
%%%%%%%%%%%%%%%%%%%%%
\section{Derivation of a non-linear equation}
\label{sec:deriv}
%%%%%%%%%%%%%%%%%%%%%

%%%%%%%%%%%%%%%%%%%%%
\paragraph{Ultrasound modulation.}
%%%%%%%%%%%%%%%%%%%%%
A methodology to combine high contrast with high resolution consists of perturbing the diffusion coefficient acoustically. Let an acoustic signal propagate throughout the domain. We assume here that the sound speed is constant and that the acoustic signal is a plane wave of the form $p\cos(k\cdot x + \varphi)$ where $p$ is the amplitude of the acoustic signal, $k$ its wave-number, and $\varphi$ an additional phase. The acoustic signal modifies the properties of the diffusion equation. We assume that such an effect is small but measurable and that the  coefficient in \eqref{eq:elliptic} is modified as 
\begin{equation}
  \label{eq:modifcoefs}
  \sigma_\eps(x) = \sigma(x) (1+\eps \cos(k\cdot x + \varphi) ),
\end{equation}
where $\eps=p\Gamma$ is the product of the acoustic amplitude $p\in\Rm$ and a measure $\Gamma>0$ of the coupling between the acoustic signal and the modulations of the constitutive parameter in \eqref{eq:elliptic}. For more information about similar derivations, we refer the reader to \cite{ABCTF-SIAP-08,BS-PRL-10,KK-AET-11}.

Let $u$ be a solution of \eqref{eq:elliptic} with fixed boundary condition $f$. When the acoustic field is turned on, the coefficients are modified as described in \eqref{eq:modifcoefs} and we denote by $u_\eps$ the corresponding solution. Note that $u_{-\eps}$ is the solution obtained by changing the sign of $p$ or equivalently by replacing $\varphi$ by $\varphi+\pi$.

By the standard continuity of the solution to \eqref{eq:elliptic} with respect to changes in the coefficients and regular perturbation arguments, we find that $u_\eps=u_0+\eps u_{1} + O(\eps^2)$. Let us multiply the equation for $u_\eps$ by $u_{-\eps}$ and the equation for $u_{-\eps}$ by $u_\eps$, subtract the resulting equalities, and use standard integrations by parts. We obtain that 
\begin{equation}
  \label{eq:Green}
  \dint_X (\sigma_\eps-\sigma_{-\eps})\nabla u_\eps\cdot\nabla u_{-\eps} dx = \dint_{\partial X}  \sigma_{-\eps} \pdr{u_{-\eps}}{\nu} u_\eps -\sigma_\eps \pdr{u_\eps}{\nu} u_{-\eps}  d\sigma.
\end{equation}
Here, $\nu(x)$ is the outward unit normal to $X\subset\Rm^n$ at $x\in\partial X$ and as usual $\frac{\partial}{\partial\nu}\equiv\nu\cdot\nabla$. We assume that $\sigma_\eps \partial_\nu u_\eps$ is measured on $\partial X$, at least on the support of $u_{\eps}=f$ for all values $\eps$ of interest. Note that the above equation still holds if the Dirichlet boundary conditions are replaced by Neumann (or more general Robin) boundary conditions. Let us define
\begin{equation}
  \label{eq:measbdry}
  J_\eps :=  \dfrac12\dint_{\partial X}  \sigma_{-\eps} \pdr{u_{-\eps}}{\nu} u_\eps -\sigma_\eps \pdr{u_\eps}{\nu} u_{-\eps}  d\sigma  \,\,=\,\, \eps J_1 + O(\eps^3).
\end{equation}
The term of order $O(\eps^2)$ vanishes by symmetry. We assume that the real valued functions $J_1=J_1(k,\varphi)$ are known. Such knowledge is based on the physical boundary measurement of the Cauchy data $(u_\eps,\sigma_\eps \partial_\nu u_\eps)$  on $\partial X$. 

Equating like powers of $\eps$, we find at the leading order that:
\begin{equation}
  \label{eq:leadingorder}
  \dint_X \big[\sigma(x) \nabla u_0\cdot\nabla u_0(x) \big] \cos(k\cdot x+\varphi) dx = J_1(k,\varphi).
\end{equation}
This may be acquired for all $k\in\Rm^n$ and $\varphi=0,\frac\pi2$, and hence provides the Fourier transform of
\begin{equation}
  \label{eq:H0Lap}
  H(x) = \sigma(x) |\nabla u_0|^2(x).
\end{equation}
Upon taking the inverse Fourier transform of the measurements \eqref{eq:leadingorder}, we thus obtain the internal functional \eqref{eq:H0Lap}.

%%%%%%%%%%%%%%%%%%%%%
\paragraph{Nonlinear hyperbolic inverse problem.}
%%%%%%%%%%%%%%%%%%%%%
The forward problem consists of assuming $\sigma$ and $f(x)$ known, solving \eqref{eq:elliptic} to get $u(x)$, and then constructing $H(x)=\sigma(x)|\nabla u|^2(x)$. The inverse problem consists of reconstructing $\sigma$ and $u$ from knowledge of $H(x)$ and $f(x)$. 

As we shall see, the linearization of the latter inverse problem may involve an operator that is not injective and so there is no guaranty that $u$ and $\sigma$ can be uniquely reconstructed; see Remark \ref{rem:linear} below. In this paper, we instead assume that the Neumann data $\sigma\nu\cdot\nabla u$ and the conductivity $\sigma(x)$ on $\partial X$ are also known. We saw that measurements of Neumann data were necessary in the construction of $H(x)$, and so our main new assumption is that $\sigma(x)$ is known on $\partial X$. This allows us to have access to $\nu\cdot\nabla u$ on $\partial X$. 
 
Combining \eqref{eq:elliptic} and \eqref{eq:H0Lap} with the above hypotheses, we can eliminate $\sigma$ from the equations and obtain the following Cauchy problem for $u(x)$:
\begin{equation}
  \label{eq:Cauchy1}
   -\nabla \cdot \dfrac{H(x)}{|\nabla u|^2(x)} \nabla u =0\,\mbox{ in } X,\qquad u=f \,\,\mbox{ and } \,\, \pdr{u}{\nu} = j \,\, \mbox{ on } \partial X,
\end{equation}
where $(H,f,j)$ are now known while $u$ is unknown. Thus the measurement operator maps $(\sigma,u)$ to $(H,f,j)$ constructed from $u(x)$ solution of \eqref{eq:elliptic}. Although this problem \eqref{eq:Cauchy1} may look elliptic at first, it is in fact hyperbolic as we already mentioned and this is the reason why we augmented it with full (redundant) Cauchy data. In the sequel, we also consider other  redundant measurements given by the acquisition of $H(x)=\sigma(x)|\nabla u|^2(x)$ for solutions $u$ corresponding to several boundary conditions $f(x)$. A general methodology to uniquely reconstruct $\sigma(x)$ from a sufficient number of redundant measurements has recently been analyzed in \cite{BBMT-11,CFGK-SJIS-09}.

The above equation may be transformed as 
\begin{equation}
  \label{eq:Cauchy2}
  (I-2\widehat{\nabla u}\otimes\widehat{\nabla u}) : \nabla^2 u + \nabla \ln H\cdot\nabla u =0\,\mbox{ in } X,\qquad u=f \,\,\mbox{ and } \,\, \pdr{u}{\nu} = j \,\, \mbox{ on } \partial X.
\end{equation}
Here $\widehat{\nabla u}=\frac{\nabla u}{|\nabla u|}$. With 
\begin{equation}
  \label{eq:gh}
  g^{ij}=g^{ij}(\nabla u)=-\delta^{ij}+2(\widehat{\nabla u})_i(\widehat{\nabla u})_j\quad\mbox{ and } \quad
  k^i=-(\nabla \ln H)_i,
\end{equation}
then \eqref{eq:Cauchy2} is recast as:
\begin{equation}
  \label{eq:Cauchy}
    g^{ij}(\nabla u) \partial^2_{ij} u + k^i \partial_i u=0\,\mbox{ in } X,\qquad u=f \,\,\mbox{ and } \,\, \pdr{u}{\nu} = j \,\, \mbox{ on } \partial X.
\end{equation}
 Note that $g^{ij}$ is a definite matrix of signature $(1,n-1)$ so that \eqref{eq:Cauchy} is a quasilinear strictly hyperbolic equation. The Cauchy data $f$ and $j$ then need to be provided on a space-like hyper-surface in order for the hyperbolic problem to be well-posed \cite{H-II-SP-83}. This is the main difficulty when solving \eqref{eq:Cauchy1} with redundant Cauchy boundary conditions.
%
%%
%%%%%%%%%%%%%%%%%
\section{Local existence, uniqueness, and stability}
\label{sec:local}
%%%%%%%%%%%%%%%%%

Once we recast \eqref{eq:Cauchy1} as the nonlinear hyperbolic equation \eqref{eq:Cauchy}, we have a reasonable framework to perform local reconstructions. However, in general, we cannot hope to reconstruct $u(x)$, and hence $\sigma(x)$ on the whole domain $X$, at least not in a stable manner. The reason is that the direction of ``time" in the second-order hyperbolic equation is $\widehat{\nabla u}(x)$. The normal $\nu(x)$ at the boundary $\partial X$ separates the (good) part of $\partial X$ that is ``space-like" and the (bad) part of $\partial X$ that is ``time-like"; see definitions below. Cauchy data on space-like surfaces such as $t=0$ provide stable information to solve standard wave equations where as in general it is known that arbitrary singularities can form in a wave equation from information on ``time-like" surfaces such as $x=0$ or $y=0$ in a three dimensional setting (where $(t,x,y)$ are local coordinates of $X$) \cite{H-II-SP-83}.

The two-dimensional setting $n=2$ is unique with this respect since the numbers of space-like and time-like variables both equal $1$ and ``t" and ``x" play a symmetric role. Nonetheless, if there exist points at the boundary of $\partial X$ such that $\nu(x)$ is ``light-like" (null), then singularities can form at such points and propagate inside the domain. As a consequence, even in two dimensions of space, instabilities are expected to occur in general. 

We present local results of uniqueness, stability of the reconstruction of $u$ and $\sigma$ in section \ref{sec:localuniq}. These results are based on the linear theory of hyperbolic equations with general Lorentzian metrics \cite{Taylor-PDE-1}. In section \ref{sec:localrec}, we adapt results in \cite{H-SP-97} to propose a local theory of reconstruction of $u(x)$, and hence $\sigma(x)$, by solving \eqref{eq:Cauchy} with data $(H,f,j)$ that are not necessarily in the range of the measurement operator $(u,\sigma)\mapsto (H,f,j)$, which to $(u,\sigma)$ satisfying \eqref{eq:elliptic} associates the Cauchy data $(f,j)$ and the internal functional $H$.

%
%%
%%%%%%%%%%%%%%%%%
\subsection{Uniqueness and stability}
\label{sec:localuniq}

Stability estimates may be obtained as follows. Let $(u,\sigma)$ and $(\tilde u,\tilde\sigma)$ be two solutions of \eqref{eq:elliptic} and the Cauchy problem \eqref{eq:Cauchy} with measurements $(H,f,j)$ and $(\tilde H,\tilde f,\tilde j)$. Note that after solving \eqref{eq:Cauchy}, we then reconstruct the conductivities with
\begin{equation}
  \label{eq:reccond} \sigma(x) = \dfrac{H}{|\nabla u|^2}(x), \qquad \tilde \sigma(x) = \dfrac{\tilde H}{|\nabla \tilde u|^2}(x).
\end{equation}
The objective of stability estimates is to show that $(u-\tilde u,\sigma-\tilde\sigma)$ are controlled by $(H-\tilde H,f-\tilde f,j-\tilde j)$, i.e., to show that small errors in measurements (that are in the range of the measurement operator) correspond to small errors in the coefficients that generated such measurements. 

Some algebra shows that $v=\tilde u- u$ solves the following linear equation
\begin{displaymath} 
 \nabla\cdot \Big( \dfrac{H}{|\nabla \tilde u|^2} \Big\{ I-\dfrac{\nabla u \otimes (\nabla u+\nabla \tilde u)}{|\nabla u|^2} \Big\} \nabla v + \dfrac{H-\tilde H}{|\nabla \tilde u|^2} \nabla \tilde u\Big)=0,
\end{displaymath}
with Cauchy data $\tilde f-f$ and $\tilde j-j$, respectively.  Changing the roles of $u$ and $\tilde u$ and summing the two equalities, we get
 \begin{displaymath} 
 \nabla\cdot \Big( \dfrac{H}{|\nabla \tilde u|^2|\nabla u|^2} \Big\{ (\nabla u+\nabla \tilde u) \otimes (\nabla u+\nabla \tilde u)-(|\nabla u|^2+|\nabla \tilde u|^2)I \Big\} \nabla v + \delta H \Big(\dfrac{\nabla \tilde u}{|\nabla \tilde u|^2}+\dfrac{\nabla u}{|\nabla u|^2}\Big)\Big)=0.
\end{displaymath}
The above operator is elliptic when $\nabla u\cdot\nabla \tilde u<0$ and is hyperbolic when  $\nabla u\cdot\nabla \tilde u>0$. Note that $\nabla u\cdot\nabla\tilde u>0$ on $\partial X$ when $j-\tilde j$ and $f-\tilde f$ are sufficiently small. We obtain a linear equation for $v$ with a source term proportional to $\delta H=\tilde H-H$. For large amounts of noise, $\nabla u$ may significantly depart from $\nabla \tilde u$, in which case the above equation may loose its hyperbolic character. However, stability estimates are useful when $\delta H$ is small, which should imply that $u$ and $\tilde u$ are sufficiently close, in which case the above operator is hyperbolic. We assume here that the solutions $u$ and $\tilde u$ are sufficiently close so that the above equation is hyperbolic throughout the domain. We recast the above equation as the linear equation
%%%%%%%%%%%
%   EQUATION  
% % %%%%%%%%%
\begin{equation}
  \label{eq:linCauchy} %\partial_i \big( \mg^{ij}(x) \partial_j v + \mh^i \delta H\big) =0\quad \mbox{ in } X \qquad
  \mg^{ij}(x) \partial^2_{ij} v + \mk^i \partial_i v + \partial_i (l^i \delta H)=0 \quad \mbox{ in } X,\qquad
  v= \tilde f-f, \quad \pdr{v}{\nu} = \tilde j-j \quad \mbox{ on } \partial X,
\end{equation}
for appropriate coefficients $\mg^{ij}$, $\mk^i$ and $l^i$.
Now $\mg^{ij}$ is strictly hyperbolic in $X$ (of signature $(1,n-1)$) and is given explicitly by
%%%%%%%%%%%
%   COND. INV. METRIC  
% % %%%%%%%%%
\begin{equation}\label{eq:mg} 
  \begin{array}{rcl}
  \mg(x) &=&  \dfrac{H}{|\nabla \tilde u|^2|\nabla u|^2} \Big\{ (\nabla u+\nabla \tilde u) \otimes (\nabla u+\nabla \tilde u)-(|\nabla u|^2+|\nabla \tilde u|^2)I \Big\} \\[3mm]
    &=& 
    \alpha(x) \Big(\,\be(x) \otimes \be(x) - \beta^2(x) \big(I-\be(x) \otimes \be(x) \big) \Big),
    \end{array}
 \end{equation} 
where
\begin{equation}\label{eq:bebeta}
    \be(x) = \dfrac{\nabla u+\nabla \tilde u}{|\nabla u+\nabla \tilde u|}(x) ,\qquad \beta^2(x) = \dfrac{|\nabla u|^2+|\nabla \tilde u|^2}{|\nabla u+\nabla \tilde u|^2- (|\nabla u|^2+|\nabla \tilde u|^2)}(x) ,
 \end{equation}
and $\alpha(x)=\frac{H}{|\nabla \tilde u|^2|\nabla u|^2}(|\nabla u+\nabla \tilde u|^2- (|\nabla u|^2+|\nabla \tilde u|^2))$ is the appropriate (scalar) normalization constant. Here, $\be(x)$ is a normal vector that gives the direction of ``time" and $\beta(x)$ should be seen as a speed of propagation (close to $1$ when $u$ and $\tilde u$ are close). When $\be$ is constant, then the above metric, up to normalization, corresponds to the operator $\partial^2_t-\beta^2(t,x')\Delta_{x'}$.

We also define the Lorentzian metric $\mh=\mg^{-1}$ so that $\mh_{ij}$ are the coordinates of the inverse of the matrix $\mg^{ij}$. We denote by $\aver{\cdot,\cdot}$ the bilinear product associated to $\mh$ so that $\aver{u,v}=\mh_{ij}u^iv^j$ where the two vectors $u$ and $v$ have coordinates $u^i$ and $v^i$, respectively. We verify that 
%%%%%%%%%%%
%   LORENTZIAN METRIC  
% % %%%%%%%%%
\begin{equation}
  \label{eq:mh}
  \mh(x) \,=\, \frac1{\alpha(x)} \Big(\,\be(x) \otimes \be(x) - \dfrac1{\beta^{2}(x)} \big(I-\be(x) \otimes \be(x) \big) \Big).
\end{equation}
 
The main difficulty in obtaining a solution $v$ to \eqref{eq:linCauchy} arises because $\nu(x)$ is not time-like for all points of $\partial X$. The space-like part $\Sigma_g$ of $\partial X$ is given by the points $x\in\partial X$ such that $\nu(x)$ is time-like, i.e., $\mh(\nu(x),\nu(x))>0$, or equivalently
\begin{equation}
  \label{eq:spacelikepartialX}
  |\nu(x) \cdot \be(x) |^2 >  \frac 1{1+\beta^2(x)} \qquad x\in \partial X.
\end{equation}
Above, the ``dot" product is with respect to the standard Euclidean metric and $\nu$ is a unit vector for the Euclidean metric, not for the metric $\mh$. The time-like part of $\partial X$ is given by the points $x\in \partial X$ such that  $\mh(\nu(x),\nu(x))<0$ (i.e., $\nu(x)$ is a space-like vector) while the light-like (null) part of $\partial X$ corresponds to $x$ such that $\mh(\nu(x),\nu(x))=0$ (i.e., $\nu(x)$ is a null vector).

When $j=\tilde j$ on $\partial X$ so that $\nabla u(x)=\nabla\tilde u(x)$ and $\beta(x)=1$ for $x\in\partial X$  (see also the proof of Theorem \ref{thm:LinearStab} below), then the above constraint becomes:
\begin{equation}
  \label{eq:spacelikepartialX2}
  |\nu(x) \cdot \widehat{\nabla u}(x) |^2 >  \frac 12 \qquad x\in \partial X.
\end{equation}
 In other words, when such a constraint is satisfied, the differential operator is strictly hyperbolic with respect to $\nu(x)$ on $\Sigma_g$. Once $\Sigma_g$ is constructed, we need to define its domain of influence $X_g\subset X$, i.e., the domain in which $v$ can be calculated from knowledge of its Cauchy data on $\Sigma_g$. In order to do so, we apply the energy estimate method for hyperbolic equations described in \cite[Section 2.8]{Taylor-PDE-1}. We need to introduce some notation as in the latter reference; see Figure \ref{fig:localconst}.

%%%%%%%%%%%%
%%    FIGURE
%%%%%%%%%%%%%%%%
\begin{figure}[ht]
\begin{center}
\includegraphics[width=12cm]{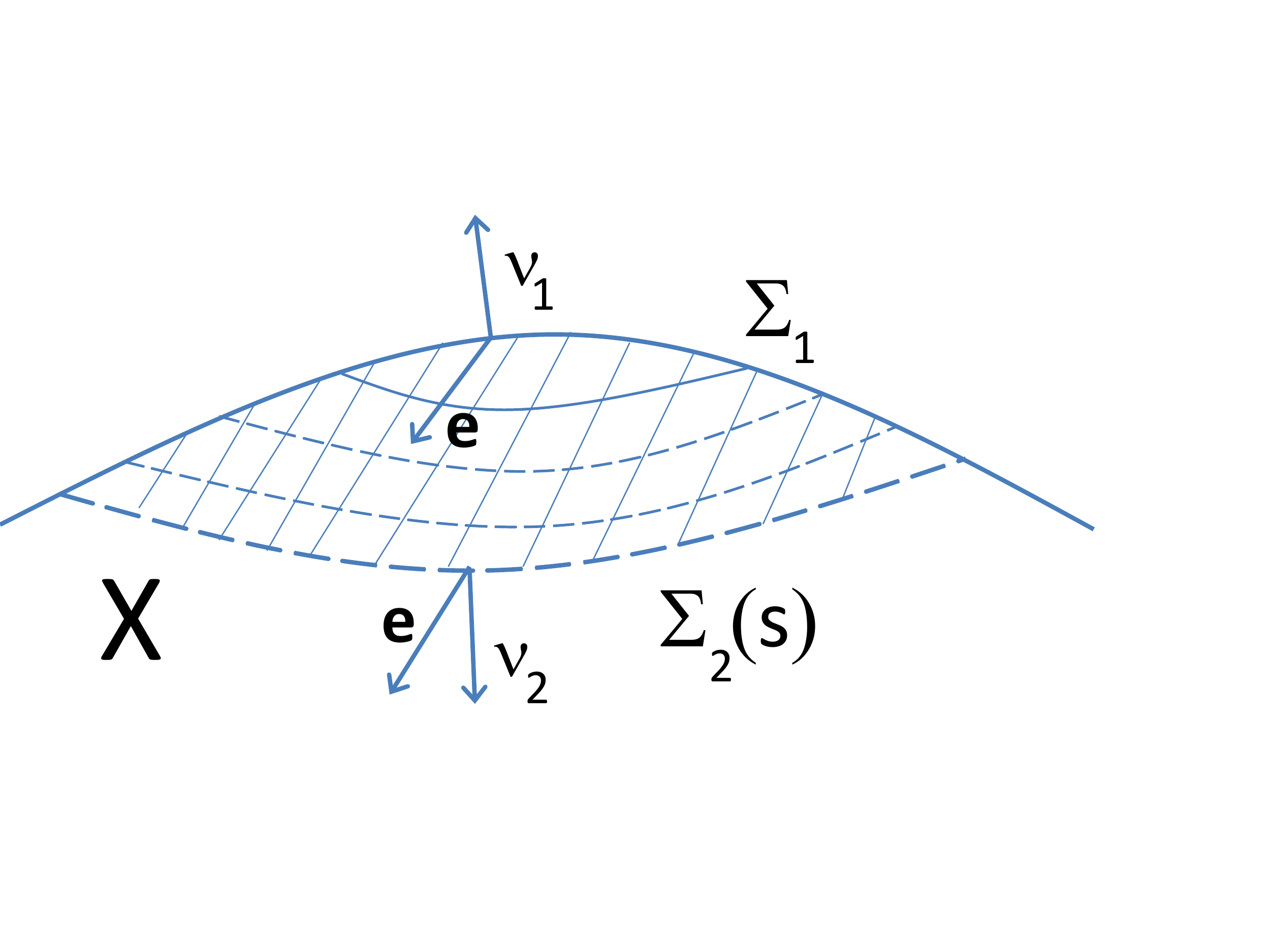}
\end{center}
\caption{Construction of the domain of influence $\cO$ (hatched area in the above picture). The unit vectors $\be$ indicate the ``time" direction of the Lorentzian metric $\mh$. The surface $\Sigma_2(s)$ has a normal vector $\nu_2(x)$ that forms a sufficiently small angle with $\be$ so that $\Sigma_2(s)$ is a space-like surface, as is $\Sigma_1\subset\Sigma_g$ with an angle such that $|\nu_1\cdot\be|$ is also sufficiently close to $1$.}
\label{fig:localconst}
\end{figure}

Let $\Sigma_1$ be an open connected component of $\Sigma_g$. We assume here that all coefficients and geometrical quantities are smooth. By assumption, $\Sigma_1$ is space-like, which means that the normal vector $\nu_1$ is time-like and hence satisfies \eqref{eq:spacelikepartialX}. Let now $\Sigma_2(s)\subset X$ be a family of (open) hyper-surfaces that are also space-like with unit (with respect to the Euclidean metric) vector $\nu_2(x)$ that is thus time-like, i.e., verifies \eqref{eq:spacelikepartialX}. We assume that the boundary of $\Sigma_2(s)$ is a co-dimension 1 manifold of $\Sigma_1$. Let then 
\begin{equation}\label{eq:O} 
   \cO (s) = \bigcup\limits_{0<\tau<s} \Sigma_2(\tau),
\end{equation}
which we assume is an open subset of $X$. In other words, we look at domains of influence $\cO (s)$ of $\Sigma_1$ that are foliated (swept out) by the space-like surfaces $\Sigma_2(\tau)$. Then we have the following result:
%%%%%%%%%%%
%   THEOREM  
% % %%%%%%%%%
\begin{theorem}[Local Uniqueness and Stability.]
  \label{thm:LinearStab}
  Let $u$ and $\tilde u$ be two solutions of \eqref{eq:Cauchy1} sufficiently close in the $W^{1,\infty}(X)$ norm and such that $|\nabla u|$, $|\nabla \tilde u|$, $H$ and $\tilde H$ are bounded above and below by positive constants.  This ensures that $\mg$ constructed in \eqref{eq:mg} is strictly hyperbolic and that $\alpha(x)$ and $\beta(x)$ in \eqref{eq:bebeta} are bounded above and below by positive constants.
  
  Let $\Sigma_1$ be an open connected component of $\Sigma_g$ the space-like component of $\partial X$ and let the domain of influence $\cO=\cO(s)$ for some $s>0$ be constructed as above. Let us define the energy
  \begin{equation}
  \label{eq:energy}
  E(dv) = \aver{dv,\nu_2}^2 - \dfrac12 \aver{dv,dv}\aver{\nu_2,\nu_2}.
\end{equation}
Here, $dv$ is the gradient of $v$ in the metric $\mh$ and is thus given in coordinates by $\mg^{ij}\partial_j v$. Then we have the local stability result
\begin{equation}
  \label{eq:localstab}
  \dint_{\cO} E(dv) dx \leq C \Big( \dint_{\Sigma_1} |f-\tilde f|^2 + |j-\tilde j|^2 \,d\sigma + \dint_{\cO} |\nabla \delta H|^2 \,dx\Big),
\end{equation}
where $dx$ and $d\sigma$ are the standard (Euclidean) volume and (hyper) surface measures on $\cO$ and $\Sigma_1$, respectively.

The above estimate is the natural estimate for the Lorentzian metric $\mh$. For the Euclidean metric, the above estimate may be modified as follows. Let $\nu_2(x)$ be the unit (for the Euclidean metric) vector to $x\in\Sigma_2(s)$ and let us define $c(x) := \nu_2(x) \cdot \be(x)$ with $\be(x)$ as in \eqref{eq:bebeta}.  Let us define
\begin{equation}
  \label{eq:theta0} \theta:= \min_{x\in \cO} \Big[c^2(x) - \dfrac{1}{1+\beta^2(x)}\Big].%{\sqrt{1-c^2(x)}}{ |c(x)| } . 
\end{equation}
We need $\theta>0$ for the metric $\mh$ to be hyperbolic with respect to $\nu_2(x)$ for all $x\in\cO$.
Then we have that 
\begin{equation}
  \label{eq:localstab2}
  \dint_{\cO} |v^2|+|\nabla v|^2 + (\sigma-\tilde\sigma)^2 \,dx \leq \dfrac{C}{\theta^2} \Big( \dint_{\Sigma_1} |f-\tilde f|^2 + |j-\tilde j|^2 \,d\sigma + \dint_{\cO} |\nabla \delta H|^2 \,dx\Big),
\end{equation}
where $\sigma$ and $\tilde\sigma$ are the reconstructed conductivities given in \eqref{eq:reccond}.
Provided that data are equal in the sense that $f=\tilde f$, $j=\tilde j$, and $H=\tilde H$, we obtain that $v=0$ and the uniqueness result $u=\tilde u$ and $\sigma=\tilde\sigma$.
\end{theorem}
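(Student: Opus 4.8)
\noindent\emph{Proof proposal.} The plan is to work entirely with the linear Cauchy problem \eqref{eq:linCauchy} for $v=\tilde u-u$, which was derived above with no smallness assumption. The hypothesis that $u,\tilde u$ are close in $W^{1,\infty}(X)$ together with the two-sided bounds on $|\nabla u|,|\nabla\tilde u|,H,\tilde H$ forces $\nabla u\cdot\nabla\tilde u>0$ throughout $X$, so the matrix $\mg$ of \eqref{eq:mg} is uniformly of signature $(1,n-1)$, the scalars $\alpha,\beta$ in \eqref{eq:bebeta} are bounded above and below by positive constants, and $\mh=\mg^{-1}$ of \eqref{eq:mh} is a genuine Lorentzian metric with uniformly controlled coefficients. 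One then identifies the space-like part $\Sigma_g\subset\partial X$ via \eqref{eq:spacelikepartialX}, fixes an open connected component $\Sigma_1\subset\Sigma_g$, and builds the foliated domain of influence $\cO=\cO(s)=\bigcup_{0<\tau<s}\Sigma_2(\tau)$ as in \eqref{eq:O}, each leaf $\Sigma_2(\tau)$ space-like (Euclidean normal $\nu_2$ time-like) with boundary anchored on $\Sigma_1$, so that $\cO$ has no lateral boundary.

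The core is a multiplier (energy-momentum) estimate for the operator $\mg^{ij}\partial^2_{ij}$ attached to the Lorentzian metric $\mh$, in the spirit of \cite[Section~2.8]{Taylor-PDE-1} but adapted to a \emph{variable} metric, the first-order term $\mk^i\partial_i v$, and a source in divergence form. I would contract \eqref{eq:linCauchy} against the derivative of $v$ along a time-like vector field transverse to the foliation (equivalently, multiply by $\aver{dv,\nu_2}$-type quantities, $dv=\mg^{ij}\partial_j v$ being the gradient in $\mh$), and rewrite the product as the Euclidean divergence of an energy current whose flux through a space-like hypersurface of Euclidean normal $\nu_2$ is exactly the density $E(dv)$ of \eqref{eq:energy}. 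Integrating over $\cO(\tau)$ and applying the divergence theorem, the boundary splits into $\Sigma_2(\tau)$, contributing the nonnegative energy $\dint_{\Sigma_2(\tau)}E(dv)\,d\sigma$ (nonnegativity is precisely the space-like condition on $\nu_2$), and a portion of $\Sigma_1$, whose contribution is controlled by the Cauchy data: on $\Sigma_1$ the tangential derivatives of $v$ come from $f-\tilde f$ and the normal derivative from $j-\tilde j$, so this term is $\lesssim\|f-\tilde f\|_{H^1(\Sigma_1)}^2+\|j-\tilde j\|_{L^2(\Sigma_1)}^2$. The bulk terms are the first-order contribution, bounded by $\dint_{\cO(\tau)}(|\nabla v|^2+v^2)$, and the source $\partial_i(l^i\delta H)$, bounded by $\dint_{\cO(\tau)}(|\nabla\delta H|^2+|\nabla v|^2)$. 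This is the step I expect to be the main obstacle: writing the exact divergence identity for the variable metric with the extra lower-order and divergence-form terms, and — crucially — keeping all constants quantitative in the "opening angle" $\theta$ of \eqref{eq:theta0} between $\nu_2$ and the null cone of $\mh$, on which both the coercivity of $E(dv)$ and the very existence of a space-like foliation anchored on $\Sigma_1$ depend.

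Using the foliation parameter as a time variable, $\dint_{\cO(s)}(\cdot)\,dx=\dint_0^s\big(\dint_{\Sigma_2(\tau)}(\cdot)\,d\sigma\big)d\tau$, the energy identity becomes a Grönwall inequality for $\mathcal E(s):=\dint_{\Sigma_2(s)}E(dv)\,d\sigma$, augmented by $\dint_{\Sigma_2(s)}v^2\,d\sigma$ — itself estimated from the Cauchy data on $\Sigma_1$ plus $\mathcal E$ by the fundamental theorem of calculus along the leaves, which closes the lower-order terms: $\tfrac{d}{ds}\mathcal E(s)\le C\,\mathcal E(s)+C\big(\|f-\tilde f\|_{H^1(\Sigma_1)}^2+\|j-\tilde j\|_{L^2(\Sigma_1)}^2+\dint_{\Sigma_2(s)}|\nabla\delta H|^2\,d\sigma\big)$. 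Grönwall bounds $\mathcal E(s)$, and integrating once more in $s$ gives \eqref{eq:localstab}.

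To reach the Euclidean estimate \eqref{eq:localstab2}, one first shows that when $\theta>0$ the form $E(dv)$ is coercive over the Euclidean gradient, $E(dv)\ge c\,\theta^2|\nabla v|^2$ with $c$ depending only on the uniform bounds — tracking the degeneracy of the Lorentzian energy as $\nu_2$ nears the null cone is what produces the $\theta^{-2}$ factor; a Poincaré inequality along the foliation, using $v|_{\Sigma_1}=\tilde f-f$, then adds $\dint_\cO v^2$. Finally, from \eqref{eq:reccond}, $\sigma-\tilde\sigma=\frac{H-\tilde H}{|\nabla u|^2}+\tilde H\big(\frac1{|\nabla u|^2}-\frac1{|\nabla\tilde u|^2}\big)$ and $|\nabla u|^2-|\nabla\tilde u|^2=-\nabla v\cdot(\nabla u+\nabla\tilde u)$ give the pointwise bound $|\sigma-\tilde\sigma|\lesssim|\delta H|+|\nabla v|$, with $\|\delta H\|_{L^2(\cO)}\lesssim\|\nabla\delta H\|_{L^2(\cO)}$ since $\delta H$ vanishes on $\partial X$ once the (common) boundary conductivity and the Cauchy data agree there — or else it is simply kept as an extra data term on the right. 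Assembling these bounds yields \eqref{eq:localstab2}; taking $f=\tilde f$, $j=\tilde j$, $H=\tilde H$ makes the right-hand side vanish, hence $v\equiv0$ on $\cO$ and therefore $u=\tilde u$ and $\sigma=\tilde\sigma$ there.
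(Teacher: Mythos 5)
Your proposal is correct and takes essentially the same route as the paper: linearization to \eqref{eq:linCauchy}, the Lorentzian energy estimate over the foliated domain of influence (which the paper simply cites from \cite[Proposition 8.1]{Taylor-PDE-1} rather than re-deriving via the divergence-identity/Gr\"onwall argument you sketch), the coercivity of $E(dv)$ over the Euclidean gradient with the $\theta^2$ factor, a Poincar\'e inequality for $v$, and the pointwise bound $|\sigma-\tilde\sigma|\lesssim|\delta H|+|\nabla v|$ from \eqref{eq:reccond}. The one step you assert without computation, $E(dv)\geq C\theta^2|\nabla v|^2$, is exactly the algebra the paper supplies by decomposing $\nu_2$ and $dv$ in the frame $(\be(x),\be^\perp(x))$ and bounding $(\beta c-s')$ below via $\theta\leq \frac{\beta c+s'}{1+\beta^2}(\beta c-s')$, and your claimed sharp dependence on $\theta$ agrees with it.
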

%%%%%%%%%%%
% PROOF  THEOREM
%%%%%%%%%%%
\begin{proof}
That $\mh$ is a hyperbolic metric is obtained for instance if $u$ and $\tilde u$ are sufficiently close in the $W^{1,\infty}(X)$ norm and if $|\nabla u|$, $|\nabla \tilde u|$, $H$ and $\tilde H$ are  bounded above and below by positive constants. The derivation of \eqref{eq:localstab} then follows from \cite[Proposition 8.1]{Taylor-PDE-1} using the notation introduced earlier in this section. The volume and surface measures $dx$ and $d\sigma$ are here the Euclidean measures and are of the same order as the volume and surface measures of the Lorentzian metric $\mh$.  This can be seen in \eqref{eq:mh} since $\alpha$ and $\beta$ are bounded above and below by positive constants. 

Then \eqref{eq:localstab} reflects the fact that the energy measured by the metric $\mh$ is controlled. However, such an ``energy" fails to remain definite for null-like vectors (vectors $\bv$ such that $\mh(\bv,\bv)=0$) and as $x$ approaches the boundary of the domain of influence of $\Sigma_g$, we expect the estimate to deteriorate. 

Let $x\in\cO$ be fixed and define $\nu=\nu_2(x)$ and $\be=\be(x)$. Let us decompose $\nu=c\be+s'\be^\perp$, where $c\be$ is the orthogonal projection of $\nu$ onto $\be$ and $s'\be^\perp:=\nu-c\be$ the projection onto the orthogonal subspace of $\Rm^n$ with $\be^\perp$ a unit vector. 
For a vector $\bv=v_1\be+v'_2\be^\perp+\bw'$ (standing for $dv$) with $\bw'$ orthogonal to $\be$ and $\be^\perp$ (and thus vanishing if $n=2$), we need to estimate
\begin{displaymath} 
   E(\bv) = h^2(\bv,\nu) - \frac12 h(\bv,\bv) h(\nu,\nu)=\frac{1}{\alpha^2}\Big[(v_1c-v_2s)^2-\frac12(v_1^2-(v_2^2+|\bw|^2))(c^2-s^2)\Big],
\end{displaymath}
where we have conveniently defined $v_2=\beta^{-1}v'_2$, $\bw=\beta^{-1}\bw'$, and $s=\beta^{-1}s'$. After some straightforward algebra, we find that
\begin{displaymath} \begin{array}{rcl}
E(\bv)&=&\dfrac{1+\beta^2}{\alpha^2\beta^2} \theta |\bw|^2 + 
\dfrac{1}{\alpha^2}\Big(\dfrac12(c^2+s^2)(v_1^2+v_2^2)-2v_1v_2cs\Big)\\
&\geq & \dfrac{1+\beta^2}{\alpha^2\beta^2} \theta |\bw|^2 + \dfrac{v_1^2+v_2^2}{2\alpha^2}(c-s)^2.
\end{array}
 \end{displaymath}
 Since $\beta$ is bounded above and below by positive constants, we need to bound $(c-s)$ from below or equivalently $(\beta c-s')^2$ from below.
Some algebra shows that 
\begin{displaymath} 
\theta \leq c^2-\frac 1 {1+\beta^2} = \frac{\beta c + s'}{1+\beta^2}(\beta c-s').
 \end{displaymath} 
Since $\theta<1$, this shows that
\begin{displaymath} 
   E(\bv) \geq C \theta^2 |\bv|^2, 
\end{displaymath} 
for a constant $C$ that depends on the lower and upper bounds for $\beta$ and $\alpha$ but not on the geometry of $\nu$. Note that the behavior of the energy in $\theta^2$ is sharp as the bound is attained for $v_1=v_2$ with $\bw=0$. This proves the error estimate for $\bv=\nabla v$. Since $v$ is controlled on $\Sigma_1$, we obtain control of $v$ on $\cO$ by the Poincar\'e inequality. Now $\sigma-\tilde\sigma$ is estimated by $H-\tilde H$ and by $\nabla u-\nabla \tilde u=\nabla v$ and hence the result.
 
 In other words, the angle $\phi(x)$ between $\be(x)$ and $\nu_2(x)$ must be such that $\beta(x)-\tan\phi(x)\geq\theta^2$ in order to obtain a stable reconstruction. When $\delta H$ is small, then  $\nabla u-\tilde \nabla u$ is small so that $\beta$ is close to $1$. As a consequence, we obtain that the constraint of hyperbolicity of $\mh$ is to first order that $\tan\phi(x)<1$, which is indeed the constraint \eqref{eq:spacelikepartialX} that holds when $\nabla u=\nabla\tilde u$ on $\partial X$.

For the uniqueness result, assume that $u$ and $\tilde u$ are two solutions of \eqref{eq:Cauchy1}. We define $\be(x)= \widehat {\nabla u}$ and $\beta^2\equiv1$. Then $v=0$ on $\Sigma_1$ implies by the preceding results that $v=0$ in a vicinity of $\Sigma_1$ in $\cO$ so that $u=\tilde u$ in the vicinity of $\Sigma_1$. This shows that $u=\tilde u$ in $\cO$ and hence in all the domain of dependence of $\Sigma_g$ constructed as above.
\end{proof}

Let us conclude this section with a few remarks. 

\begin{figure}[ht]
\begin{center}
\includegraphics[width=9cm]{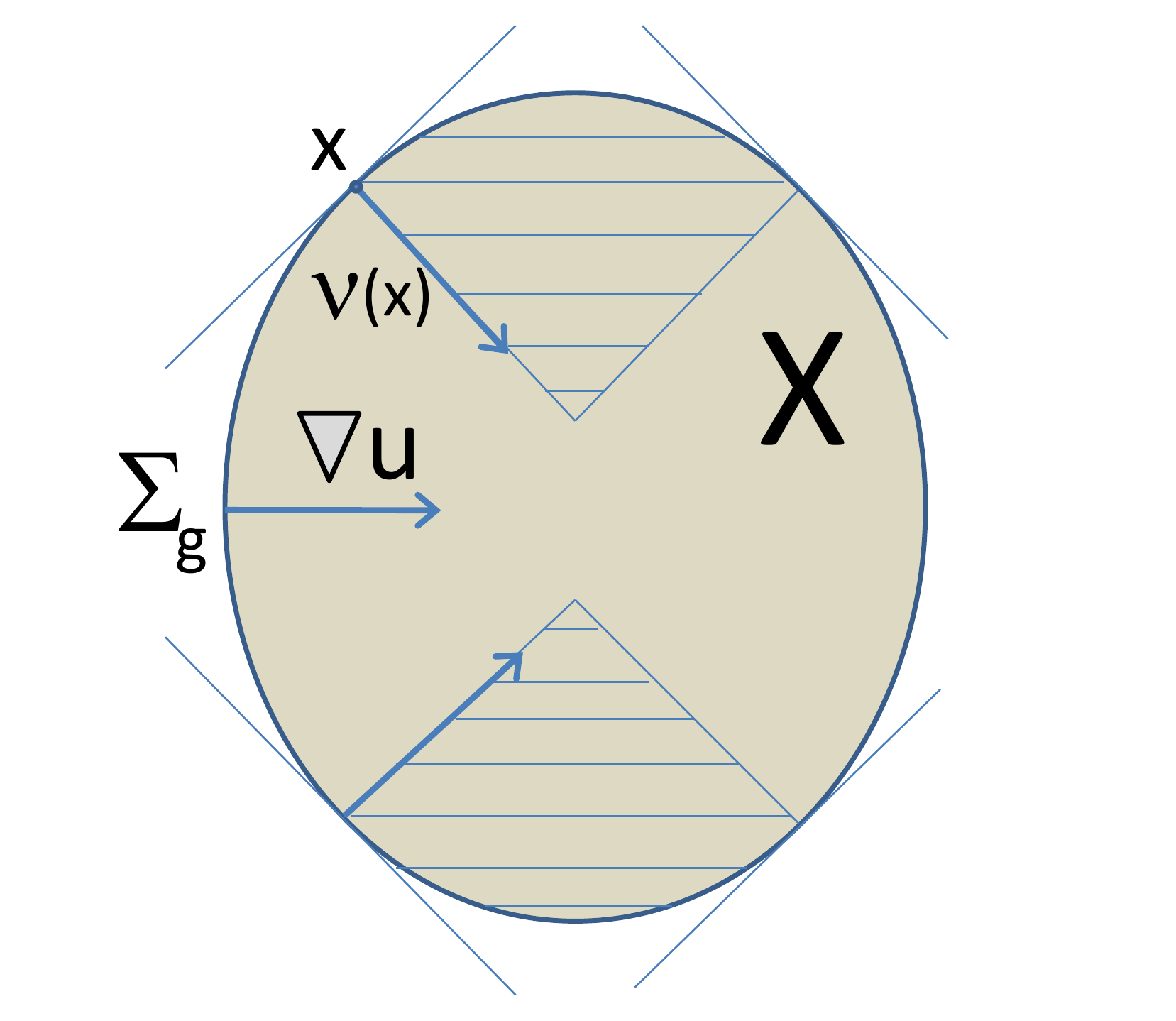}
\end{center}
\caption{Geometry of the domain of influence in Euclidean geometry with $\sigma\equiv1$ and $u=x_1$ on a domain $X$ given by an ovoid. In a three dimensional geometry, we can consider the above picture a cross section at $y=0$ of a three dimensional domain of revolution about the axis $\be_1\equiv \nabla u$. The vector $\nu(x)$ is a ``null-vector" making an angle at $45$ degrees with $\nabla u$.
In two dimensions, $\Sigma_g$ is the union of two connected components where as in thee dimensions, $\Sigma_g$ is composed of a unique connected component in $\partial X$. The hatched domain corresponds to $X\backslash X_g$, the part of the domain $X$ that is not the domain of influence of $\Sigma_g$. In two dimensions, $\nabla u^\perp=\be_2$ may also play the role of ``time" so that $X\backslash X_g$ is the domain of influence of $\Sigma\backslash\Sigma_g$. In three dimension, the hatched region is not accessible with the techniques developed in this paper.}
\label{fig:geom}
\end{figure}

\begin{remark} \rm
In two dimensions, we can exchange the role of space-like and time-like variables since both are one-dimensional and find, at least for sufficiently simple geometries, that the complement of the domain of influence of $\Sigma_g$ in $X$ is the domain of influence of the complement of $\Sigma_g$ in $\partial X$. We thus obtain stability of the reconstruction in all of $X$ except in the vicinity of the geodesics for the metric $\mg$ that emanate from $\partial X$ in a direction $\nu(x)$ that is null-like, i.e., a vector such that $\mh(\nu(x),\nu(x))=0$, or equivalently such that $|\nu(x)\cdot \be(x)|^2=\frac12$.

In three (or higher) dimensions, however, no such exchange of the role of time and space is possible. All we can hope for is a uniqueness and stability result in the domain of influence of $\Sigma_g$. The solution $v$ and the conductivity $\sigma$ are not stably reconstructed on the rest of the domain without additional information, coming e.g. from other boundary conditions $f(x)$. The case of redundant measurements of this type is considered in section \ref{sec:CGO} below, and is analyzed in a different context in \cite{BBMT-11,CFGK-SJIS-09}.
\end{remark}

\begin{remark}\rm Assuming that the errors on the Cauchy data $f$ and $j$ are negligible, we obtain the following stability estimate for the conductivity
\begin{equation}
  \label{eq:stabcond1}
    \|\sigma-\tilde\sigma\|_{L^2(\cO)} \leq \dfrac{C}{\theta} \|H-\tilde H\|_{H^1(X)}.
\end{equation}
The measurements are of the form $H(x)=\sigma(x) |\nabla u|^2(x)$, which imposes reasonably restrictive assumptions on $\sigma$ ensuring that $\nabla u$ is a solution in $H^2(\Omega)$. Under additional regularity assumptions on $\sigma$, for instance assuming that $H\in H^s(X)$ for $s\geq2$, we find that
\begin{equation}
  \label{eq:stabcond2}
    \|\sigma-\tilde\sigma\|_{L^2(\cO)} \leq \dfrac{C}{\theta} \|H-\tilde H\|^{1-\frac 1s}_{L^2(X)} \|H+\tilde H\|^{\frac1 s}_{H^s(X)},
\end{equation}
by standard interpolation. We thus obtain a standard H\"older estimate in the setting where the error in the measurements is quantified in the square integrable sense.
\end{remark}

\begin{remark} \label{rem:linear} \rm
  The linearization of \eqref{eq:Cauchy1} in the vicinity of $\sigma_0=1$ with only Dirichlet data is an ill-posed problem when $X$ is a two-dimensional disc. Indeed, assume Dirichlet data of the form $f(x)=x_1$ in \eqref{eq:elliptic} so that the unperturbed solution is $u_0=x_1$ in $X$. This shows that $\be(x)=\be_1$ in the definition \eqref{eq:mg} so that $\mh=\be_1\otimes \be_1-\be_2\otimes \be_2$ in \eqref{eq:mh}. In other words, the linearized problem consists of solving 
  \begin{displaymath} 
   \pdrr u{x_1} -\pdrr u{x_2} =0 \quad \mbox{ in } X=\{x_1^2+x_2^2<1\},\qquad u=f\quad\mbox{ on } \partial X.
 \end{displaymath}
 The general solution to the above equation is of the form $F(x_1-x_2)+G(x_1+x_2)$ and there is an infinite number of linearly independent solutions to the above equation with $f=0$. The linearization of the UMEIT problem in this specific geometry provides an operator that is not injective.
\end{remark}

%
%%
%%%%%%%%%%%%%%%%%
\subsection{Reconstruction of the conductivity}
\label{sec:localrec}

The construction of the solution $u$, from which we deduce the reconstruction of $\sigma(x)$, requires that we solve the nonlinear equation \eqref{eq:Cauchy}. Let us assume that $g^{ij}$ is given as in \eqref{eq:gh} and that the vector field $h$ and the source terms $f$ and $j$ are smooth given functions. Then we can construct a unique solution to \eqref{eq:Cauchy} locally in the vicinity of the part of $\partial X$ that is space-like. In this section, we assume that the geometry and the coefficients of the wave equation are sufficiently smooth.

Let a point $x^0\in \Sigma_g$, the space-like part of $\partial X$, i.e., so that $g(\nu(x^0),\nu(x^0))\geq\eta>0$. In the vicinity of $x^0$, which we now call $0$, we parameterize $\partial X$ by the variables $(y_1,\ldots,y_{n-1})$ and denote by $y_0$ the signed distance to $\partial X$. In the vicinity of $x^0=0$, $y=F(x)$ is a diffeomorphism from a neighborhood $U$ of  $x=0$ to the neighborhood $V=F(U)$ of $y=0$. Moreover, locally, $DF$ is close to the identity matrix (after appropriate rotation of the domain if necessary) if $U$ is sufficiently small. We denote by $J_F={\rm det}(DF)$ the Jacobian of the transformation.

Let us come back to the equation
\begin{equation}
  \label{eq:Cauchy1b}
  -\nabla \cdot \sigma(x)\nabla u =  -\nabla \cdot \dfrac{H(x)}{|\nabla u|^2(x)} \nabla u =0\,\mbox{ in } X\qquad u=f \,\,\mbox{ and } \,\, \pdr{u}{\nu} = j \,\, \mbox{ on } \partial X.
\end{equation}
We define $v(y)=u(x)$, i.e., $v=F_*u$ and then verify that $(\nabla u)(x)=DF^t\circ F^{-1}(y)\nabla v(y)$. In the $y$ coordinates, we find that 
\begin{displaymath} 
    -\nabla\cdot F_*\sigma \nabla v =0, \quad F(U),
\end{displaymath}
where we have the standard expression in the $y$ coordinates:
\begin{displaymath} 
     F_*\sigma (y) = \tilde\sigma(y) DF DF^t \circ F^{-1}(y),\qquad \tilde \sigma= J_F^{-1} \sigma\,\, \circ F^{-1}.
\end{displaymath}
%where again we have identified $DF DF^t=DF DF^t $. 
We may thus recast the above equation as the following non-linear equation
\begin{equation}
  \label{eq:Cauchy2b}
  -\nabla\cdot \tilde H \dfrac{DF DF^t \circ F^{-1}}{|DF^t \nabla v|^2} \nabla v =0,\qquad \tilde H = F_*(J_F^{-1}H) = J_F^{-1} H \,\, \circ F^{-1}.
\end{equation}
Note that the boundary conditions are now posed on the surface $y_0=0$ where
\begin{displaymath} 
     v (0,y') = F_*f (0,y') \qquad \mbox{ and } \qquad \partial_{y_0} v(0,y') = \alpha(y') F_*f(0,y') + \beta(y') F_*j(0,y'),
\end{displaymath}
with $\alpha$ close to $0$ and $\beta$ close to $1$ on $V=F(U)$. It remains to differentiate in \eqref{eq:Cauchy2b} to obtain after straightforward but tedious calculations the expression
\begin{equation}
  \label{eq:Cauchy2t}
  g^{ij}_F \partial^2_{ij} v + h^i_F \partial_i v =0 ,\qquad F(U),
\end{equation}
with the above ``initial" conditions at $y_0=0$, where
\begin{equation}
  \label{eq:gFhF}
   \begin{array}{rcl}
  g^{ij}_F &=& %-(DFDF^t)^i_k\delta^{jk} +2 (DF \widehat{DF^t\nabla v})^i (\widehat{DF^t\nabla v})^k DF^j_k,\\
  -(DFDF^t)^i_k\delta^{jk} +2 (DF \widehat{DF^t\nabla v})^i (\widehat{DF^t\nabla v})^j,\\
  h^i_F &=& -(\nabla \ln \tilde H \cdot DF DF^t)^i - (\nabla\cdot DF DF^t)^i +2(DF \widehat{DF^t\nabla v})^j (\widehat{DF^t\nabla v})\partial_j ^k DF^i_k.
  \end{array}
\end{equation}
When $F=I$, we recover \eqref{eq:Cauchy2}. The non-linear terms now involve functions of $\widehat{DF^t\nabla v}$.

Note that $g_F=DF \, g \, DF^t$ if we denote $\widehat{DF^t\nabla v}\equiv\widehat{\nabla u}$ and thus transforms as a tensor of type (2,0). As a consequence, the metric (a tensor of type (0,2)) $g_F^{-1}=DF^{-1}g DF^{-t}$ since $g^{-1}=g$ as can be easily verified. Let $\nu_F=F_*\nu=DF \nu \circ F^{-1}$ be the push-forward of the normal vector seen as a vector field.  At $x^0$, the change of variables is such that 
\begin{displaymath} 
   g_{F,F(x^0)}^{-1} (\pdr{}{y_0}, \pdr{}{y_0}) = g_{F,F(x^0)}^{-1}(\nu_F,\nu_F) = g^{-1}_{x^0}(\nu,\nu)=g_{x^0}(\nu,\nu) \geq \eta>0.
 \end{displaymath}
 This shows that $g_F$ remains hyperbolic in the vicinity of $F(x^0)$ since $DFDF^t=I$ at $y=0$ by construction and $DF$ is smooth. Moreover, the above is equivalent to 
 \begin{displaymath} 
     g^{ij}_F \partial^2_{ij} = \square + \gamma^{ij} \partial^2_{ij}, \qquad
       \square = \frac{\partial^2}{\partial y^0\partial y^0} - \Delta_{y'},\quad \Delta_{y'} =     
       \sum_{j=1}^{n-1} \frac{\partial^2}{\partial y^j \partial y^j},
 \end{displaymath}
 where $\sum_{i,j} |\gamma^{ij}|\leq \frac{1-\eta}2$.  The above is nothing but the fact that $g_F$ is hyperbolic in the vicinity of $y=0$. Note that $\gamma^{ij}=\gamma^{ij}(x,\widehat{DF^t\nabla v}$).
 
We thus have a nonlinear hyperbolic equation of the form
\begin{equation}
  \label{eq:Cauchy0}
  \begin{array}{l}
   \big( \square + \gamma^{ij}(x,\widehat{DF^t\nabla v})\partial_{ij} + h^i(x,\widehat{DF^t\nabla v}) \partial_i \big) v =0, \qquad y_0>0,\,\, y'\in\Rm^{n-1} \\
   v(0,y') = v_0(y'), \qquad \partial_{y_0} v(0,y') = j_0(y').
\end{array}
\end{equation} 
Since propagation in a wave equation is local we can extend the boundary conditions for $y=(0,y')$ outside the domain $F(U)$ by $v_0=0$ and $\partial_{y_0} v=1$ and the functions $\gamma^{ij}$ and $h^i$ by $0$ outside of $F(U)$. This allows us to obtain an equation posed on the half space $y_0>0$.

The nonlinear functions $\gamma^{ij}(x,\widehat{DF^t\nabla v})$ and $h^i(x,\widehat{DF^t\nabla v})$ are smooth functions of $\nabla v$ except at the points where $\nabla v=0$. However, we are interested in solutions such as $\nabla v$ does not reach $0$ to preserve the hyperbolic structure of $g^{ij}$. Note that $|\nabla v|$ is bounded from below by a positive constant on $y_0=0$ by assumption. We obtain a bound on the uniform norm of the Hessian of $v$, which implies that at least for a sufficiently small interval $y_0\in (0,t_0)$, $|\nabla v|$ does not vanish and $\gamma^{ij}$ and $h^i$ can then be considered as smooth functions of $x$ and $\nabla v$.

Using Theorem 6.4.11 in \cite{H-SP-97} and the remark following (6.4.24) of that reference, the above equation satisfies the hypotheses to obtain an a priori estimate for 
\begin{displaymath} 
        M(y_0) = \dsum_{|\alpha|\leq \kappa+2} \|\partial^\alpha u(y_0,\cdot)\|_{L^2(\Rm^{n-1})},
 \end{displaymath}
with $\kappa$ the smallest integer strictly greater than $\frac {n-1}2$. By Sobolev imbedding, this implies that the second derivatives of $v$ are uniformly bounded so that for at least a small interval, $|\nabla v|$ is bounded away from $0$. 

Once $v$, and hence $u$ is reconstructed, at least in the vicinity of the part $\Sigma_g$ of $\partial X$ that is space like for $\nabla u$, we deduce that 
\begin{displaymath} 
   \sigma(x) = \dfrac{H(x)}{|\nabla u|^2(x)}.
\end{displaymath}
Note that $\nabla u$ cannot vanish by construction so that the above equality for $\sigma(x)$ is well-defined. We already know that a solution to the above nonlinear equation exists in the absence of noise since we have constructed it by solving the original linear equation. In the presence of significant noise, the nonlinear equation may behave in a quite different manner than that for the exact solution. However, the above construction shows that the nonlinear equation can be solved locally if the measurement $H(x)$ is perturbed by a small amount of noise.

%
%%
%%%%%%%%%%%%%%%%%%%%%%%%%%
\section{Global reconstructions of the diffusion coefficient}
\label{sec:global}
%%%%%%%%%%%%%%%%%%%%%%%%%%
%%
%

The picture in Fig. \ref{fig:geom} shows that in general, we cannot hope to obtain a global reconstruction from a single measurement of $H(x)$ even augmented with full Cauchy data. Only Cauchy data on the space-like part of the  boundary can be used to obtain stable reconstructions. 

Global reconstructions have been obtained from redundant measurements of the form $H_{ij}=S_i\cdot S_j$ with $S_i=\sqrt\sigma \nabla u_i$ and $u_i$ solution of \eqref{eq:elliptic} with Dirichlet conditions $f=f_i$,  in \cite{CFGK-SJIS-09} in the two dimensional setting and in \cite{BBMT-11} in the two- and three- dimensional settings; see also \cite{KK-AET-11}.

%We can obviously try to acquire more data until the vector fields $\be(x)$ are sufficiently rich to allow for reconstructions in the whole domain. Alternatively, we can use the redundant data to perform an inversion in $SO(n,\Rm)$  to reconstruct $S_i=\sqrt\sigma \nabla u_i$ from knowledge of the matrix $H_{ij}=S_i\cdot S_j$. If $S$ is the matrix with columns given by $S_i$, then $S^TS=H$ and what we do now know about $S$ is therefore $R=S H^{-\frac12}$ which a rotation in $SO(n,\Rm)$. This technique was analyzed first. Note that the stability estimates obtained in the latter paper are similar but not quite identical to the ones obtained in \eqref{eq:stabcond1}.

This section analyzes geometries in which a unique measurement $H(x)$ or a small number of measurements of the form $H(x)$, augmented with Cauchy data $(f,j)$ allow one to uniquely and stably reconstruct $\sigma(x)$ on the whole domain $X$. These reconstructions are obtained by (possibly) modifying the geometry of the problem so that the domain where $\sigma(x)$ is not known lies within the domain of dependence of $\Sigma_g$. We consider two scenarios. In the first scenario, considered in section \ref{sec:internalsource}, we slightly modify the problem to obtain a model with an internal source of radiation $f$. Such geometries are guaranteed to provide a unique global reconstruction in dimension $n=2$ but not necessarily in higher spatial dimensions, where global reconstructions hold only for a certain class of coefficients $\sigma(x)$. In the second scenario, analyzed in section \ref{sec:CGO}, we consider a setting where reconstructions are possible when the Lorentzian metric is the Euclidean (Lorentzian) metric, i.e., $\alpha=\beta=1$ in \eqref{eq:mh}. We then show the existence of an open set of illuminations $f$ for three different measurements of the form $H(x)$ such that the global result obtained for the Euclidean metric remains valid for arbitrary, sufficiently smooth coefficients $\sigma(x)$.

%
%%
%%%%%%%%%%%%%%%%%%%%%%
\subsection{Geometries with an internal source}
\label{sec:internalsource}

From the geometric point of view, the Cauchy data are sufficient to allow for full reconstructions when $\Sigma_g=\partial X$, so that the whole boundary $\partial X$ is space-like for the metric $g$, and $X$ is the domain of dependence of $\Sigma_g$. This can happen for instance when $\partial X$ is a level set of $u$ and the normal derivative of $u$ either points inwards or outwards at every point of $\partial X$. When $X$ is a simply connected domain, the maximum principle prevents one from having such a geometry. However, when $X$ is not simply connected, then such a configuration can arise. We will show that such a configuration (with $X$ the domain of dependence of $\Sigma_g$) is always possible in two dimensions of space. When $n\geq3$, such configurations hold only for a restricted class of conductivities $\sigma(x)$ for which no critical points of $u(x)$ exist.

Let us consider the two dimensional case $n=2$. We assume that $X$ is an open smooth domain diffeomorphic to an annulus and with boundary $\partial X=\partial X_0\cup\partial X_1$; see Fig. \ref{fig:geomannulus}. We assume that $f=0$ on the external boundary $\partial X_0$ and $f=1$ on the internal boundary $\partial X_1$. The boundary of $X$ is composed of two smooth connected components that are different level sets of the solution $u$ to \eqref{eq:elliptic}, which is uniquely defined in $X$. 
\begin{figure}[ht]
\begin{center}
 \includegraphics[width=9cm]{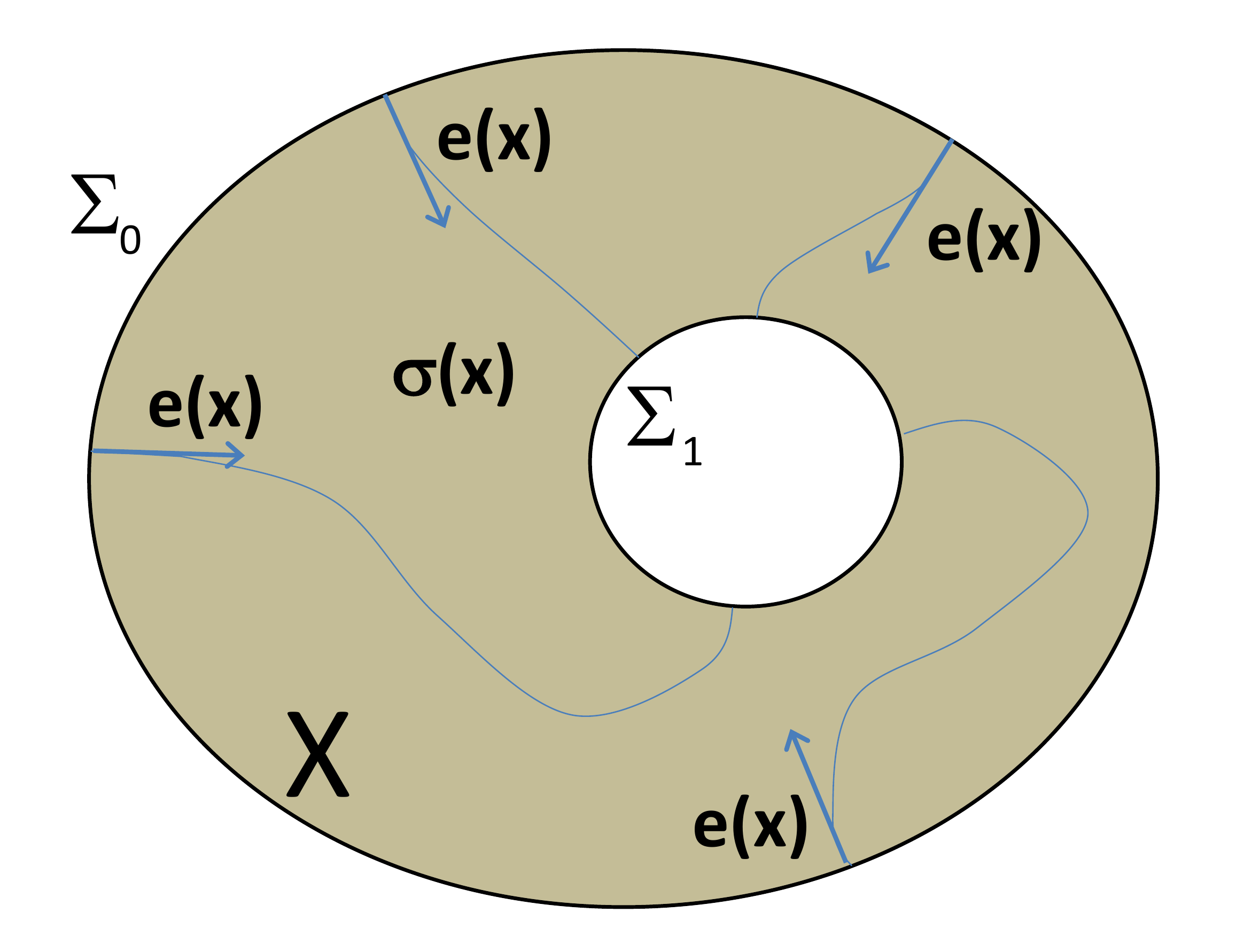}
\end{center}
\caption{Geometry of an annulus in two space dimensions with boundaries $\Sigma_0$ and $\Sigma_1$, the level sets where $u=0$ and $u=1$, respectively. The four curves correspond to four integrals of the flow of the gradient vector field $\widehat{\nabla u}$.}
\label{fig:geomannulus}
\end{figure}

In practice, such a domain $X$ may be constructed as follows. As we do in the geometry depicted in Fig. \ref{fig:geomextension} below, we embed $\tilde X$, the domain where $\sigma$ is unknown, into a larger domain $X$ with, e.g., $\sigma(x)=\sigma_0$ on $X\backslash\tilde X$  and with a hole where we impose the aforementioned boundary conditions.
Then we have the following result:
%%%%%%%%%%%%%%%%%%%%%%%%%
%% PROPOSITION
%%%%%%%%%%%%%%%%%%%%%%%%%
\begin{proposition}
\label{prop:doughnut2d}
 Let $X$ be the geometry described above with $n=2$ and $u(x)$ the solution to \eqref{eq:elliptic}. We assume here that both the geometry and $\sigma(x)$ are sufficiently smooth. Then $|\nabla u|$ is bounded from above and below by positive constants. The level sets $\Sigma_c=\{x\in X,\, u(x)=c\}$ for $0<c<1$ are smooth curves that separate $X$ into two disjoint subdomains.
\end{proposition}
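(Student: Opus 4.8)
The plan is to prove the statement in three stages: the elementary bounds from the maximum principle and elliptic regularity; the crucial absence of critical points of $u$, which is where the hypothesis $n=2$ is used; and finally the structure of the level sets.

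\emph{Step 1 (the easy bounds).} Since $f$ takes the two distinct constant values $0$ on $\partial X_0$ and $1$ on $\partial X_1$ and $u$ solves \eqref{eq:elliptic}, the strong maximum principle gives $0<u<1$ throughout $X$, so every level set $\Sigma_c$ with $0<c<1$ is a compact subset of $X$ that does not meet $\partial X$. Because $u$ attains its minimum $0$ on $\partial X_0$ and its maximum $1$ on $\partial X_1$, Hopf's lemma shows that $\nabla u$ does not vanish on $\partial X$; and since $u$ is constant on each boundary curve, $\nabla u$ is there normal to $\partial X$, pointing into $X$ along $\partial X_0$ and out of $X$ along $\partial X_1$. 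Global Schauder estimates (the geometry and $\sigma$ being smooth and the Dirichlet data constant) give $u\in C^\infty(\overline X)$, so $|\nabla u|$ is bounded above on the compact set $\overline X$.

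\emph{Step 2 (no critical points; the lower bound).} This is the heart of the matter and the main obstacle. Since $u$ is non-constant, the Hartman--Wintner theory of planar divergence-form elliptic equations, as used in \cite{A-AMPA-86}, implies that $\nabla u$ has at most isolated zeros in $X$, each of finite order: near such a point $x_0$, in suitable $C^1$ coordinates $u-u(x_0)$ is asymptotic to a homogeneous harmonic polynomial of some degree $k\ge2$, and hence the index of the vector field $\nabla u$ at $x_0$ equals $1-k\le-1$. By compactness of $\overline X$ and non-vanishing of $\nabla u$ on $\partial X$ these critical points are finite in number. Now apply the Poincar\'e--Hopf index theorem to $\nabla u$ on the compact surface-with-boundary $X$: as $\nabla u$ is transverse (in fact normal) to $\partial X$, pointing inward along $\partial X_0$ and outward along $\partial X_1$, the sum of the indices of its interior zeros equals $\chi(X)=0$ (the contribution of the inward component $\partial X_0$ vanishing because it is a circle), while $X$ is diffeomorphic to an annulus. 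Since every index is $\le-1$, there can be no zeros at all; thus $|\nabla u|$ is continuous and strictly positive on $\overline X$, hence bounded below by a positive constant.

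\emph{Step 3 (the level sets).} With $\nabla u\ne0$ everywhere, the implicit function theorem shows that each $\Sigma_c$ with $0<c<1$ is a smooth compact embedded one-manifold in $X$, i.e., a finite disjoint union of circles. To see that it is a single circle and that it separates $X$, consider the gradient flow $\dot\gamma=\nabla u(\gamma)$: $u$ increases strictly along every trajectory, and no trajectory can remain in $\overline X$ for all forward or backward time, since an $\omega$- or $\alpha$-limit set would be a nonempty flow-invariant set on which $u$ is constant, contradicting $\nabla u\ne0$. Hence every trajectory enters $X$ through $\partial X_0$ and exits through $\partial X_1$; reparametrizing each trajectory by the value of $u$ along it yields a diffeomorphism $\Psi\colon\partial X_0\times[0,1]\to\overline X$ with $u(\Psi(q,c))=c$. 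Then $\Sigma_c=\Psi(\partial X_0\times\{c\})$ is a single smooth closed curve, and $X\setminus\Sigma_c=\Psi(\partial X_0\times(0,c))\sqcup\Psi(\partial X_0\times(c,1))$ is the disjoint union of two connected (annular) subdomains, as claimed. The only genuinely non-routine ingredient is the lower bound of Step 2, which cannot come from local estimates and combines the local structure of critical points of $2$-D elliptic equations (index $\le-1$) with the global topology of the annulus ($\chi=0$); this is exactly why the result is confined to $n=2$, since in dimension $n\ge3$ critical points of nonnegative index may occur and the obstruction disappears.
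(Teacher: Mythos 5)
Your proof is correct, but the key step is argued by a genuinely different route than the paper's. For the absence of critical points, the paper also starts from Alessandrini's local description (isolated critical points, an even number $n_i\ge 2$ of level arcs meeting at equal angles), but then proceeds by a hands-on level-set argument: it shows any critical level curve must wind around $\Sigma_1$ (otherwise the maximum principle and unique continuation force $u$ to be constant), rules out more than two arcs at a critical point by exhibiting a forbidden region bounded by two such loops, and finally kills the remaining case by applying the Hopf lemma on the sub-annulus bounded by $\Sigma_1$ and the critical level curve $\gamma_1$, whose normal derivative would have to vanish at the critical point. You instead combine the quantitative local information (index of $\nabla u$ equal to $1-k\le -1$ at an interior zero) with the Poincar\'e--Hopf/Morse law of vector fields on the annulus, where $\nabla u$ is transverse to $\partial X$ (inward on $\partial X_0$, outward on $\partial X_1$) and the index sum is $\chi(X)-\chi(\partial_-X)=0$ because the boundary components are circles; since each index is $\le -1$, there are none. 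This is shorter and makes transparent exactly where $n=2$ enters (negative local indices versus vanishing Euler characteristic), at the cost of invoking the index form of the Hartman--Wintner/Alessandrini asymptotics and the mixed-boundary Poincar\'e--Hopf statement, whereas the paper uses only isolatedness of critical points plus the maximum principle, unique continuation, and the Hopf lemma, and in doing so displays the geometry of critical level sets that it later contrasts with the three-dimensional counterexample. Your Step 3 (gradient-flow foliation, $\Sigma_c$ a single circle separating $X$ into two annuli) is essentially the same as the paper's concluding argument, with the minor simplification that, once $|\nabla u|\ge c>0$, finiteness of the exit time follows directly from $du/dt=|\nabla u|^2\ge c^2$ without any limit-set discussion.
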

\begin{proof}
     The proof of the first part is based on the fact that critical points of solutions to elliptic equations in two dimensions are isolated \cite{A-AMPA-86}. First of all, the Hopf lemma \cite{evans} ensures that no critical point exists on the smooth closed curves $\Sigma_0$ and $\Sigma_1$. Let $x_i$ be the finite number of points where $\nabla u(x_i)=0$. At each $x_i$, the level set of $u$ with value $0<c_i=u(x_i)<1$ is locally represented by $n_i$ ($n_i$ even) smooth simple arcs emanating from $x_i$ that make an angle equal to $2\pi/n_i$ at $x_i$ \cite{A-AMPA-86}. For instance, if only two simple arcs emanate from $x_0$, then these two arcs form a continuously differentiable curve in the vicinity of  $x_0$. Between critical points, level sets of $u$ are smooth by the inverse function theorem. 
     
Let us assume that there is a point $x_i$ with more than two simple arcs leaving  $x_i$.  Let $\gamma_j$, $1\leq j\leq 4$, be such arcs. If $\gamma_1$ meets another critical point, we pick one of the possible other arcs emanating from this critical point to continue the curve $\gamma_1$. This is always possible as critical points always have an even number of leaving simple arcs. The curve $\gamma_1$ cannot meet $\Sigma_0$ or $\Sigma_1$ and therefore must come back to the point $x_i$. Let us assume the existence of a closed sub-loop of $\gamma_1$ that does not self-intersect and does not wind around $\Sigma_1$ (i.e., is homotopic to a point). In the interior of that close sub-loop, $u$ is then constant by the maximum principle and hence constant on $X$ by the unique continuation theorem \cite{H-I-SP-83}. This is impossible and therefore $\gamma_1$ must wind around $\Sigma_1$. Let us pick a subset of $\gamma_1$, which we still call $\gamma_1$ that winds around $\Sigma_1$ once. The loop meets one of the other $\gamma_j$ to come back to $x_i$, which we call $\gamma_2$ if it is not $\gamma_1$. Now let us follow $\gamma_3$. Such a curve also has to come back to $x_i$.  By the maximum principle and the unique continuation theorem, it cannot come back with a sub-loop homotopic to a point. So it must come back also winding around $\Sigma_1$. But $\gamma_1$ and $\gamma_3$ are then two different curves winding around $\Sigma_1$. This implies the existence of a connected (not necessarily simply connected) domain whose boundary is included in $\gamma_1\cup\gamma_3$. Again, by the maximum principle and the unique continuation theorem, such a domain cannot exist. So any critical point cannot have more than two simple arcs of level curves of $u$ leaving it.

 So far, we have proved that any critical point $x_i$ sees exactly two arcs leaving $x_i$ at an angle equal to $\pi$ since by the maximum principle, critical points can not be local mimina or maxima. These two arcs again have to meet winding around $\Sigma_1$. This generates what we call a single curve $\gamma_1$ with no possible self-intersection. Moreover, since all angles at critical points are equal to $\pi$, the curve $\gamma_1$ is of class $C^1$ and piecewise of class $C^2$. Let $X_c$ be the annulus with boundary equal to $\Sigma_1\cup\gamma_1$. On $X_c$, $u$ satisfies an elliptic equation with values $u=1$ on $\Sigma_1$ and $0<u=c_i<1$ on $\gamma_1$. Since $\gamma_1$ is sufficiently smooth now (smooth on each arc with matching derivatives on each side of each critical point), it satisfies the  interior sphere condition  and we can apply the Hopf lemma \cite[Lemma 3.4]{gt1} to deduce that the normal derivative of $u$ on $\gamma_1$ cannot vanish at $x_i$ or anywhere along $\gamma_1$. There are therefore no critical points of $u$ in $\bar X$. By continuity, this means that $|\nabla u|$ is uniformly bounded from below by a positive constant. Standard regularity results show that it is also bounded from above.
 
Now let $0<c<1$ and $\Sigma_c$ be the level set where $u=c$. Such a level set separates $X$ into two subdomains where $0<u<c$ and $c<u<1$, respectively, by the maximum principle. We therefore obtain a foliation of $X$ into the union of the smooth curves $\Sigma_c$ for $0<c<1$. Now let $x\in\Sigma_c$ and consider the flow of $\nabla u$ in both directions emanating from $x$. Then both curves are smooth and need to reach the boundary at a unique point. Since any point on $\Sigma_0$ is also mapped to a point on $\Sigma_1$ by the same flow, this shows that $\Sigma_c$ is diffeomorphic to $\Sigma_0$ and $\Sigma_1$.
\end{proof}

The result extends to higher dimensions provided that $|\nabla u|$ does not vanish with exactly the same proof. Only the proof of the absence of critical points of $u$ was purely two-dimensional. In the absence of critical points, we thus obtain that $\be(x)=\widehat{\nabla u}=\nu(x)$ so that $\nu(x)$ is clearly a time-like vector. Then the local results of Theorem \ref{thm:LinearStab} become global results, which yields the following proposition:
\begin{proposition}
\label{prop:doughnutnd}
 Let $X$ be the geometry described above in dimension $n\geq2$ and $u(x)$ the solution to \eqref{eq:elliptic}. We assume here that both the geometry and $\sigma(x)$ are sufficiently smooth. We also {\em assume} that  $|\nabla u|$ is bounded from above and below by positive constants. Then the nonlinear equation \eqref{eq:Cauchy} admits a unique solution and the reconstruction of $u$ and of $\sigma$ is stable in $X$ in the sense  described in Theorem \ref{thm:LinearStab}.
\end{proposition}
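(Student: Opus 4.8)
The plan is to show that, in the annular geometry at hand, the space-like part $\Sigma_g$ of $\partial X$ exhausts $\partial X$ and that $X$ is foliated by space-like hypersurfaces issuing from $\Sigma_g$, so that the local conclusions of Theorem~\ref{thm:LinearStab} become global at no extra cost. Since $|\nabla u|$ is assumed bounded below by a positive constant, $\widehat{\nabla u}=\nabla u/|\nabla u|$ is a smooth unit vector field on $\bar X$; the same holds for a nearby solution $\tilde u$, which we take $W^{1,\infty}$-close to $u$ so that the Lorentzian metric $\mh$ of \eqref{eq:mh} is strictly hyperbolic, as in Theorem~\ref{thm:LinearStab}. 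Because each boundary component $\partial X_0=\{u=0\}$ and $\partial X_1=\{u=1\}$ is a level set of $u$, the outer unit normal satisfies $\nu(x)=\pm\widehat{\nabla u}(x)$ for every $x\in\partial X$. Hence, in the uniqueness setting $\be=\widehat{\nabla u}$, $\beta\equiv1$, one has $|\nu\cdot\be|^2=1>\frac12$, i.e. \eqref{eq:spacelikepartialX2} holds at every boundary point; and for the stability metric, where $\be$ in \eqref{eq:bebeta} is $W^{1,\infty}$-close to $\widehat{\nabla u}$ and $\beta$ close to $1$, one still has $|\nu\cdot\be|^2>1/(1+\beta^2)$. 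In both cases $\Sigma_g=\partial X$, and in particular $\Sigma_1:=\partial X_1$ is an admissible connected ``initial'' space-like surface in the sense of Theorem~\ref{thm:LinearStab}.

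Next I would foliate $X$ by level sets. The second part of the proof of Proposition~\ref{prop:doughnut2d}, whose argument once $u$ has no critical point is dimension-independent, shows that the level sets $\Sigma_c=\{x\in X:u(x)=c\}$, $0<c<1$, are smooth closed hypersurfaces, each diffeomorphic to $\partial X_1$, that separate $X$ and whose union is $X$. The normal to $\Sigma_c$ is $\widehat{\nabla u}$, which equals $\be$ for the uniqueness metric and is close to $\be$ for the stability metric; thus every leaf is space-like, the quantity $c(x)=\nu_2(x)\cdot\be(x)$ of \eqref{eq:theta0} equals $1$ (resp. is close to $1$) on every leaf, and $\theta$ in \eqref{eq:theta0} is bounded below by a fixed positive constant ($=\frac12$ in the uniqueness case) uniformly over $X$. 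Choosing $\Sigma_2(\tau)=\Sigma_{1-\tau}$ and $\cO(s)=\bigcup_{0<\tau<s}\Sigma_{1-\tau}=\{1-s<u<1\}$, we have $\cO(s)\uparrow X$ as $s\uparrow1$, so applying \eqref{eq:localstab}--\eqref{eq:localstab2} on $\cO(s)$ and letting $s\to1$ gives the global estimate
\[
  \int_X |v|^2+|\nabla v|^2+(\sigma-\tilde\sigma)^2\,dx\;\le\;\frac{C}{\theta^2}\Big(\int_{\partial X_1}|f-\tilde f|^2+|j-\tilde j|^2\,d\sigma+\int_X|\nabla\delta H|^2\,dx\Big),
\]
with $\sigma,\tilde\sigma$ as in \eqref{eq:reccond}. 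Setting $f=\tilde f$, $j=\tilde j$, $H=\tilde H$ forces $v=0$, hence $u=\tilde u$ and $\sigma=\tilde\sigma$ on all of $X$, so the uniqueness and stability statements of Theorem~\ref{thm:LinearStab} hold with $\cO=X$. Existence is immediate in the noise-free case, since the solution $u$ of \eqref{eq:elliptic} solves \eqref{eq:Cauchy} with the measured data $(H,f,j)$; for slightly perturbed data one continues the local construction of section~\ref{sec:localrec} along the foliation, the uniform lower bound on $\theta$ and on $|\nabla v|$ preventing the hyperbolic structure from degenerating.

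I expect the only delicate point to be the globalization of the energy identity. Taylor's estimate is phrased for a domain of influence swept by surfaces whose boundaries lie on $\Sigma_1$, whereas here the leaves $\Sigma_{1-\tau}$ are closed; this is in fact what makes a global result possible, since there is no time-like portion of $\partial X$ through which energy can leak, but one must check that the energy identity integrates cleanly across closed leaves and that $\theta$, $\alpha$, $\beta$ and the Poincar\'e constant on $X$ remain uniformly controlled up to $c=0$ and $c=1$, all of which follow from the assumed uniform bounds on $|\nabla u|$, $|\nabla\tilde u|$, $H$, $\tilde H$ and from the smoothness of the geometry.
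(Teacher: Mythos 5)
Your argument is correct and follows essentially the same route as the paper, which disposes of this proposition in a few lines: since the boundary components are level sets of $u$ and $|\nabla u|$ does not vanish, $\nu=\widehat{\nabla u}=\be$ on $\partial X$, so the whole boundary is space-like and the level-set foliation of Proposition~\ref{prop:doughnut2d} (whose second half is dimension-independent once critical points are excluded) turns the local conclusions of Theorem~\ref{thm:LinearStab} into global ones. Your write-up merely makes explicit what the paper leaves implicit (the choice $\Sigma_2(\tau)=\{u=1-\tau\}$, the limit $s\to1$, and the absence of lateral boundary terms for closed leaves), all of which is consistent with the intended proof.
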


\begin{remark}
  \label{rem:doughnut}\rm
The above geometry with a hole is not entirely necessary in practice. Formally, we can assume that the hole with boundary $\Sigma_1$ shrinks and converges to a point $x_0\in\partial X$ at the boundary of the domain. Thus, the illumination $f$ is an approximation of a delta function at $x_0$. The level sets of the solution are qualitatively similar to the level sets in the annulus. Away from $x_0$, the surface $\partial X$ is a level set of the solution $u$ and hence the normal to the level set is a time-like vector for the Lorentzian metric with direction $\be(x)=\nu(x)$. Away from $x_0$, we can solve the wave equation inwards and obtain stable reconstructions in all of $X$ but a small neighborhood of $x_0$. This construction should also provide stable reconstructions in arbitrary dimensions provided that $u$ does not have any critical point.
\end{remark}

In dimensions $n\geq3$, however, we cannot guaranty that $u$ does not have any critical point independent of the conductivity. If the conductivity is close to a constant where we know that no critical point exists, then by continuity of $u$ with respect to small changes in $\sigma(x)$, $u$ does not have any critical point and the above result applies. In the general case, however, we cannot guaranty that $\nabla u$ does not vanish and in fact can produce a counter-example using the geometry introduced in \cite{BMN-ARMA-04} (see also \cite{M-PAMS-93} for the existence of critical points of elliptic solutions):
\begin{proposition}
\label{prop:badsigma3d}
   There is an example of a smooth conductivity such that $u$ admits critical points.
\end{proposition}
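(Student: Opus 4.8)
The plan is to import into this setting a three-dimensional phenomenon that simply cannot occur when $n=2$ (there it is excluded by Proposition~\ref{prop:doughnut2d} and the isolated–critical–point theorem of \cite{A-AMPA-86}): a solution of a scalar divergence–form elliptic equation whose gradient vanishes at an interior point. The input is the construction of \cite{BMN-ARMA-04} (see also \cite{M-PAMS-93}): there is a smooth, $\mathbb Z^3$–periodic, \emph{isotropic} conductivity $\sigma_0$ on $\Rm^3$, bounded between positive constants, for which the three periodic–corrector solutions $u_0^{(i)}$ of $-\nabla\cdot\sigma_0\nabla u=0$ with $u_0^{(i)}-x_i$ periodic have a corrector determinant $\det\big[\nabla u_0^{(1)}\,\big|\,\nabla u_0^{(2)}\,\big|\,\nabla u_0^{(3)}\big]$ that changes sign on $\Rm^3$. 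By continuity it vanishes at some $p$, so the three vectors $\nabla u_0^{(i)}(p)$ are linearly dependent: there is $a=(a_1,a_2,a_3)\ne 0$ with $\sum_i a_i\nabla u_0^{(i)}(p)=0$. The function $v:=\sum_i a_i u_0^{(i)}$ then solves the same equation, has the affine trace $a\cdot x$ on every hypersurface, and satisfies $\nabla v(p)=0$. In its weakest reading the Proposition is now immediate: take $X$ a smooth ball around $p$, $\sigma:=\sigma_0|_X$ and Dirichlet datum $f:=v|_{\partial X}$ in \eqref{eq:elliptic}; then $u=v$ is the unique solution and has the interior critical point $p$, so the hypothesis ``$|\nabla u|$ bounded below'' used in Proposition~\ref{prop:doughnutnd} and Remark~\ref{rem:doughnut} is not automatic in dimension $n\ge 3$.

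To realize the counterexample \emph{inside} the annulus geometry of Section~\ref{sec:internalsource}, I would localize and rescale. First replace the periodic $\sigma_0$ by a variant that is identically $1$ outside a ball $B_{1/2}$ (obtained by cutting off the periodic construction, over several periods if needed) and still carrying a point where the gradients of the solutions of the frozen problem $-\nabla\cdot\sigma_0\nabla v=0$ on $\Rm^3$, $v-x_3\to 0$ at infinity, are linearly dependent. Then let $X$ be a spherical shell with the prescribed data ($f\equiv0$ on the outer, $f\equiv1$ on the inner boundary); the $\sigma\equiv1$ solution $u_\star$ is radial, so on a small ball $B(x_\star,\rho)\subset X$ away from $\partial X$ its gradient is nonvanishing and nearly constant. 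Insert $\sigma(x):=\sigma_0\big((x-x_\star)/\rho\big)$, which glues smoothly to $\sigma\equiv1$ since $\sigma_0\equiv1$ near $\partial B_{1/2}$. After rotating so that $\nabla u_\star(x_\star)$ points along the $x_3$–axis and normalizing, interior Schauder estimates give that the normalized restriction of the true shell solution to $B(x_\star,\rho)$ converges in $C^1\big(\overline{B_{1/2}}\big)$ to $|\nabla u_\star(x_\star)|\,v$ as $\rho\to0$; since the model $v$ has a topologically stable critical point, the shell solution $u$ inherits a critical point near $x_\star+\rho p$ for $\rho$ small, so $|\nabla u|$ is not bounded below and the conclusion of Proposition~\ref{prop:doughnutnd} fails in exactly the geometry of Section~\ref{sec:internalsource}.

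The one genuinely delicate point is the persistence of the critical point under the $C^1$–small divergence–form perturbation implicit in the rescaling limit: non–degeneracy of $\nabla v$ at $p$ is not available, so the survival of a zero must be argued topologically. The cleanest way to do so, which also sidesteps all degree bookkeeping, is to keep three illuminations alive simultaneously in the shell: the three perturbed solutions have a corrector determinant that is $C^0$–close to the sign–changing model one, hence still changes sign, hence still vanishes somewhere in $X$, and an appropriate linear combination of the three solutions then has a critical point — at the cost of using three measurements rather than one. If one insists on a single measurement, one instead has to track the coefficients $a$ along the perturbation and appeal to the nonzero degree with which the triple $\big(\nabla u_0^{(1)},\nabla u_0^{(2)},\nabla u_0^{(3)}\big)$ crosses the linearly–dependent locus. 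Everything else — smooth gluing of conductivities, interior elliptic regularity, and the scaling limit — is routine.
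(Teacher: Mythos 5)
Your first paragraph is a correct proof of the proposition, but it takes a genuinely different route from the paper. The paper does not invoke the theorem of \cite{BMN-ARMA-04} as a black box: it rebuilds the mechanism by hand. Inside the ball of radius $4$ with the fixed affine datum $f=z$ it places two interlocked tori, takes the explicitly smooth conductivity $\sigma=1+\lambda\varphi$ with $\varphi$ smooth and supported in the tori, and for $\lambda$ large uses continuity of $u$ with respect to $\sigma$ to get $u(0,0,1)<0$ and $u(0,0,-1)>0$; the rotational and reflection symmetries force $\partial_x u=\partial_y u=0$ on the $z$-axis, and the sign pattern $-,+,-,+$ of $z\mapsto u(0,0,z)$ forces interior extrema where $\partial_z u=0$ as well, hence critical points. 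Your argument instead takes the sign change of the corrector determinant from \cite{BMN-ARMA-04}, locates a zero of the determinant, passes to the linear combination $v=\sum_i a_i u_0^{(i)}$ with $\nabla v(p)=0$, and restricts to a ball with $f=v|_{\partial X}$. That deduction is shorter and is the standard way the BMN example is exploited in the hybrid-imaging literature; what the paper's construction buys is self-containedness, a conductivity that is smooth by construction, and a natural boundary condition ($f=z$) that is not manufactured from the unknown solution itself.

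Two caveats. First, your black box is stated a bit generously: the composite in \cite{BMN-ARMA-04} is a high-contrast two-phase arrangement of interlocked rings, so the \emph{smooth}, periodic, isotropic $\sigma_0$ you assert requires an extra smoothing-plus-continuity step (the determinant must still change sign after mollification, which is plausible since the sign change occurs in the constant-coefficient phase, but it is not literally the cited statement); the paper short-circuits exactly this issue by writing $\sigma=1+\lambda\varphi$ with smooth $\varphi$ from the outset. Second, your second and third paragraphs are unnecessary for the statement: neither the proposition nor the paper's own proof takes place in the annulus geometry of Proposition~\ref{prop:doughnutnd}, so the rescaling into the shell and the delicate persistence of a possibly degenerate critical point can simply be dropped. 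Moreover, the worry about needing ``three measurements'' is moot here: a linear combination of solutions is itself a single solution of \eqref{eq:elliptic} with the corresponding single Dirichlet datum, so exhibiting a critical point never costs more than one boundary condition; the counting of measurements matters for the inverse problem, not for this counterexample.
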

\begin{proof}
Consider the geometry in three dimensions depicted in Fig. \ref{fig:critical}.  The domain $X$ is a smooth, convex, domain invariant by rotation leaving $\be_z$ invariant and by symmetry $z\to -z$ and including two disjoint, interlocked, tori $T_1$ and $T_2$. The first torus $T_1$ is centered at $c_1=(0,0,1)$ with base circle $\{\be_z+2\be_x+\alpha(\cos\phi\be_x+\sin\phi\be_y),\, 0\leq \phi<2\pi\}$ rotating around $c_1$ in the plane $(\be_x,\be_z)$ (top torus in Fig. \ref{fig:critical}) for $\alpha=\frac12$, say. The second torus $T_2$ is centered at $c_2=(0,0,-1)$ with base circle $\{-\be_z+2\be_y+\alpha(\cos\phi\be_x+\sin\phi\be_y),\, 0\leq \phi<2\pi\}$ rotating around $c_2$ in the plane $(\be_y,\be_z)$ (bottom torus in Fig. \ref{fig:critical}). 
 \begin{figure}[ht]
\begin{center}
 \includegraphics[width=10cm]{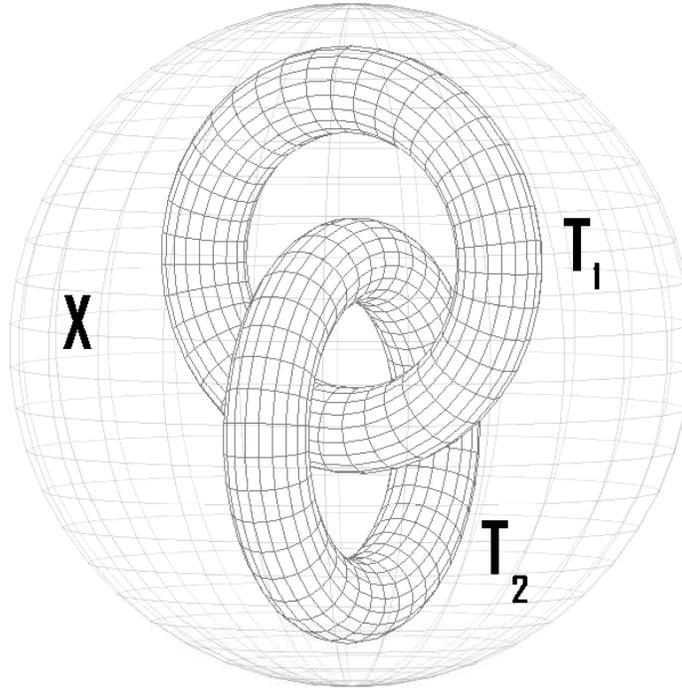}
\end{center}
\caption{Geometry of a critical point. $X$ is the ball of radius $4$. The interlocked tori are the top torus $T_1$ and the bottom torus $T_2$.}
\label{fig:critical}
\end{figure}
We consider the boundary condition $u=z$ on $\partial X$. 

We assume that $\sigma(x)=1+\lambda\varphi(x)$ in \eqref{eq:elliptic}, where $\varphi(x)$ is a smooth, non-trivial, non-negative function with non-vanishing support inside each of the tori $T_1$ and $T_2$ that respects the invariance by rotation and the symmetries of the two tori. 
We normalize $\varphi(x)$ by $1$ on the circles $\{\be_z+2(\cos\phi \be_z+\sin\phi\be_y), \, 0\leq \phi<2\pi\}$ and $\{-\be_z+2(\cos\phi \be_z+\sin\phi\be_y), \, 0\leq \phi<2\pi\}$ at the center of the volumes delimited by the two tori. When $\lambda=0$ so that $\sigma(x)\equiv1$, then $u=z$ is the solution of the problem \eqref{eq:elliptic}.
As $\lambda$, an hence $\sigma$ inside the tori, converges to $+\infty$, the solution $u$ is such that $u$ converges to a constant $C_1>0$ on the support of $\varphi$ inside $T_1$ and $C_2<0$ on the support of $\varphi$ inside $T_2$. For $\lambda$ sufficiently large, by continuity of the solution $u$ with respect to $\sigma$, we obtain that $u(0,0,1)<0$ since $(0,0,1)$ is inside  $T_2$ and $u(0,0,-1)>0$ since $(0,0,-1)$ is inside $T_1$. 
Since the geometry is invariant by symmetry $x\to -x$ and $y\to-y$, then so is the solution $x$ and hence $\partial_x u(0,0,z)=\partial_y u(0,0,z)=0$ for all $(0,0,z)\in X$. Now the function $z\to u(0,0,z)$ goes from negative to positive to negative back to positive values as $z$ increases, and so has at least two critical points. At these points, $\nabla u=0$ and hence the possible presence of critical points in elliptic equations in dimensions three and higher.
\end{proof}

Note that the above symmetries are not necessary to obtain critical points, which appear generically in structures of the form of two interlocked rings with high conductivities as indicated above. We describe this results at a very intuitive, informal, level. Indeed, small perturbations of the above geometry and boundary conditions  make that the level sets $\Sigma_c=\{u=c\}$ for $c$ sufficiently large and $c$ sufficiently small are simply connected co-dimension 1 manifolds with boundary on $\partial X$. When $\sigma$ is sufficiently large, then $u$ converges to two different values $c_1$ and $c_2$ inside the two discs (say one positive in $T_1$ and one negative in $T_2$).  Thus for $\sigma$ sufficiently large, the level set $u=c_1$, assuming it does not have any critical point, is a smooth locally co-dimension 1 manifold, by the implicit function theorem, that can no longer be simply connected. Thus as the level sets $c$ decrease from high values to $c_1$, they go through a change of topology that can only occur at a critical point of $u$ \cite{MC-Els-69}. 

%%%%%%%%%%%%%%%%%%%%%%%%%%%%%%%
\subsection{Complex geometric optics solutions and global stability}
\label{sec:CGO}

Let us now consider a domain $\tilde X$ where $\sigma(x)$ is unknown and close to a constant $\sigma_0$. Let us assume that $\tilde X$ is embedded into a larger domain $X$ and that we can assume that $\sigma(x)$ is known and also close to the constant $\sigma_0$. Then, it is not difficult to construct $X$ so that $\tilde X$ lies entirely within the domain of dependence of $\Sigma_g$; see for instance the geometry depicted in Fig. \ref{fig:geomextension}. 
\begin{figure}[ht]
\begin{center}
 \includegraphics[width=10cm]{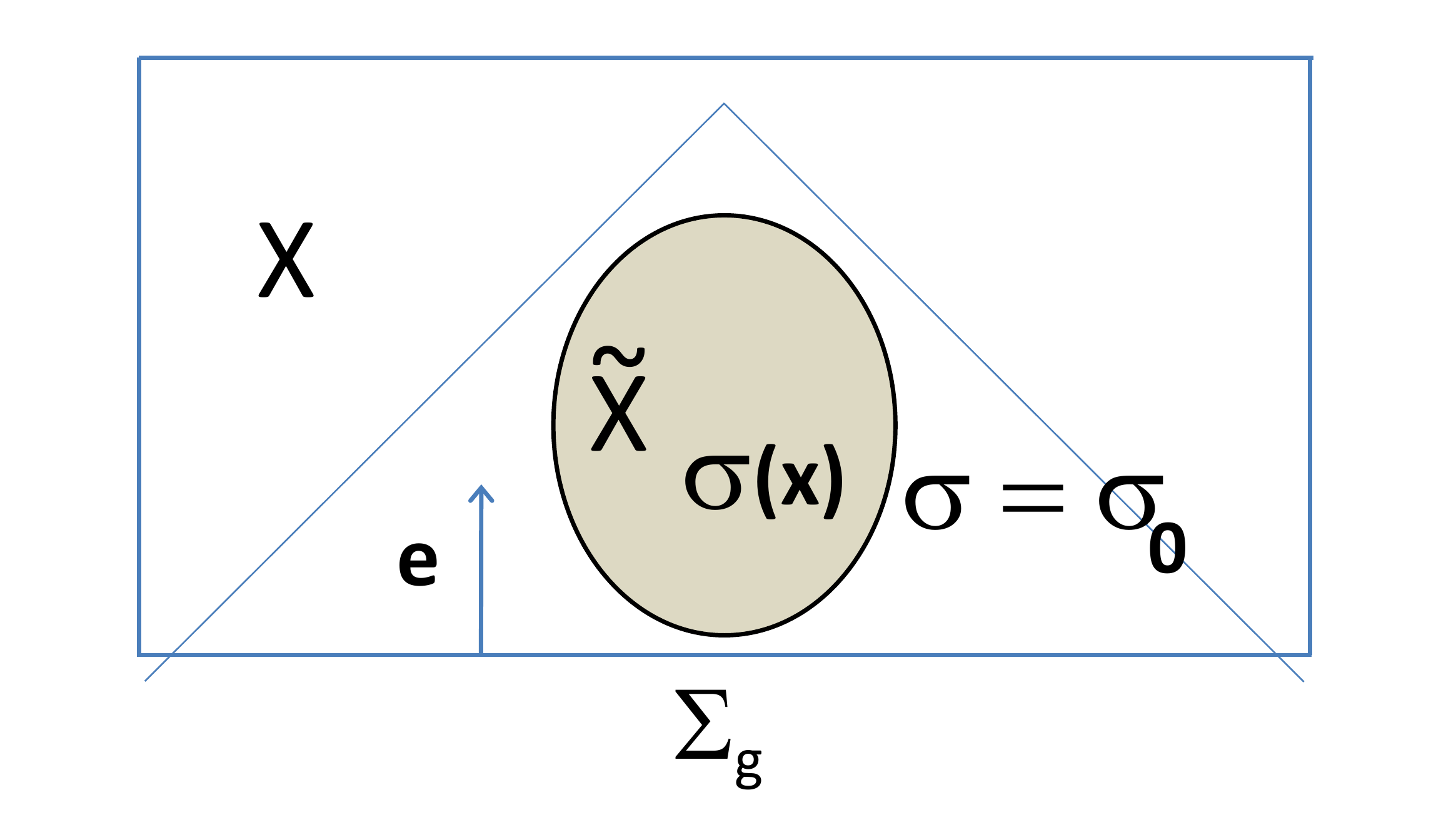}
\end{center}
\caption{Geometry of a domain where the reconstruction of the unknown $\sigma$ on $\tilde X$ is possible from a single measurement. The geometry of the Lorentzian metric is represented when $\sigma(x)=\sigma_0$. By continuity, the domain of influence of $\Sigma_g$ includes $\tilde X$ for all smooth conductivities $\sigma(x)$ sufficiently close to $\sigma_0$.}
\label{fig:geomextension}
\end{figure}

For the rest of the section, we show that global reconstructions can be obtained for general sufficiently smooth metrics provided that three well-chosen measurements are available. This result is independent of spatial dimension. The measurements are constructed by means of complex geometrical optics solutions. 

Let $k$ be a vector in $\Rm^n$ and $k^\perp$ be a vector orthogonal to $k$ of same length. Let $\rho=ik+k^\perp$ be a complex valued vector so that $\rho\cdot\rho=0$. Thus, $e^{\rho\cdot x}$ is harmonic and $\nabla e^{\rho\cdot x}=\rho e^{\rho\cdot x}$. The latter gradient has a privileged direction of propagation $\rho$, which is, however, complex valued. Its real and imaginary parts are such that 
\begin{equation} \label{eq:theta} 
   e^{-k^\perp\!\cdot x}\Im \nabla e^{\rho\cdot x} = |k|\theta(x),\qquad e^{-k^\perp\!\cdot x}\Re \nabla e^{\rho\cdot x}=|k|\theta^\perp(x),
 \end{equation}
where $\theta(x) = \hat k\cos k\!\cdot\! x + \hat k^\perp \sin k\!\cdot \!x$ and $\theta^\perp(x)=-\hat k\sin k\!\cdot \!x+\hat k^\perp\cos k\!\cdot\! x$. As usual, $\hat k=\frac{k}{|k|}$.

Consider propagation with Cauchy data given on a hyperplane with normal vector $\hat k\in\Sm^{n-1}$. We want to make sure that we always have at our disposal a Lorentzian metric for which $\hat k$ is a time-like vector so that the available Cauchy data live on a space-like surface for that metric. For the rest of the section, we assume that $k=|k|\be_1$ and that $k^\perp=|k|\be_2$ so that 
\begin{equation}
\label{eq:spectheta}
\theta(x)=\hat k\cos |k|x_1 + \hat k^\perp \sin |k|x_1\quad\mbox{ and }\quad \theta^\perp(x)=-\hat k\sin |k|x_1+\hat k^\perp\cos |k|x_1
\end{equation}
For a vector field with unit vector $\theta(x)$, we associate the Lorentz metric with direction $\theta$ given by $\mh_\theta=2\theta\otimes\theta-I$. %We say that a metric $\mh$ of the form in \eqref{eq:mh} is {\em close} to $\mh_\theta$ if $\be(x)$ is close to $\theta(x)$ and $\alpha$ and $\beta$ are close to $1$ with respect to the uniform norm.

The Lorentzian metrics with directions $\theta(x)$ and $\theta^\perp(x)$ oscillate with $x_1$. A given vector $\hat k$ therefore cannot be time-like for all points $x$. However, we can always construct two different linear combinations of these two directions that form time-like vectors for a given range of $k\cdot x=|k|x_1$. Such combinations allow us to solve the wave equation forward and obtain unique and stable reconstructions on the whole domain $X$. The above construction with $e^{\rho\cdot x}$ harmonic can be applied when $\sigma(x)=\sigma_0$ a constant. It turns out that we can construct complex geometric optics solutions for arbitrary, sufficiently smooth conductivities $\sigma(x)$ and obtain global existence and uniqueness results in that setting.  We state the following result. 
%%%%%%%%%%%%%%
%%% THEOREM CGO
%%%%%%%%%%%%%%
\begin{theorem}
  \label{thm:globalCGO}
  Let $\sigma$ be extended by $\sigma_0=1$ on $\Rm^n\backslash \tilde X$, where $\tilde X$ is the domain where $\sigma$ is not known. We assume that $\sigma$ is smooth on $\Rm^n$. Let $\sigma(x)-1$ be supported without loss of generality on the cube $(0,1)\times(-\frac12,\frac12)^{n-1}$. Define the domain $X=(0,1)\times B_{n-1}(a)$, where $B_{n-1}(a)$ is the $n-1$-dimensional ball of radius $a$ centered at $0$ and where $a$ is sufficiently large that the light cone for the Euclidean metric emerging from $B_{n-1}(a)$ strictly includes $\tilde X$. Then there is an open set of illuminations $(f_1,f_2)$ such that if $u_1$ and $u_2$ are the corresponding solutions of \eqref{eq:elliptic}, then the following measurements
  \begin{equation}
  \label{eq:threemeas} H_{11}(x)=\sigma(x) |\nabla u_1|^2(x),\,\,\, H_{22}(x)=\sigma(x)|\nabla u_2|^2(x),\,\,\,
  H_{12}(x)= \sigma(x)|\nabla (u_1+u_2)|^2\!,
\end{equation}
with the corresponding Cauchy data $(f_1,j_1)$, $(f_2,j_2)$ and $(f_1+f_2,j_1+j_2)$ at $x_1=0$ uniquely determine $\sigma(x)$. Moreover, let $\tilde H_{ij}$ be measurements corresponding to $\tilde\sigma$ and $(\tilde f_1,\tilde j_1)$ and $(\tilde f_2,\tilde j_2)$ the corresponding Cauchy data at $x_1=0$. We assume that $\sigma(x)-1$ and $\tilde\sigma(x)-1$ (also supported in $(0,1)\times(-\frac12,\frac12)^{n-1}$) are smooth and such that their norm in $H^{\frac n2+3+\eps}(\Rm^n)$ for some $\eps>0$ are bounded by $M$. Then for a constant $C$ that depends on $M$, we have the global stability result
\begin{equation}
  \label{eq:globalstab}
  \|\sigma-\tilde\sigma\|_{L^2(\tilde X)}\leq C \Big(\|d_C-\tilde d_C\|_{(L^2(B_{n-1}(a)))^4} + \sum_{(i,j)\in I} \|\nabla H_{ij}-\nabla \tilde H_{ij}\|_{L^2(X)}\Big).
\end{equation}
Here, we have defined $I=\{(1,1),(1,2),(2,2)\}$ and $d_C=(f_1,j_1,f_2,j_2)$ with $\tilde d_C$ being defined similarly.
  \end{theorem}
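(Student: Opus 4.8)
\emph{Overall strategy.} The plan is to manufacture the illuminations $(f_1,f_2)$ from complex geometric optics (CGO) solutions and then propagate the energy estimate of Theorem \ref{thm:LinearStab} slab by slab in the $x_1$ direction, using the fact that CGO gradients have a \emph{$\sigma$-independent leading direction} that rotates with $x_1$. The key payoff is that, although Theorem \ref{thm:LinearStab} is phrased for $u,\tilde u$ close in $W^{1,\infty}$, its proof only uses that the difference equation \eqref{eq:linCauchy} is strictly hyperbolic with space-like slices and with bounded lower-order coefficients; the CGO construction supplies this uniformly over the $M$-ball even when $\sigma$ and $\tilde\sigma$ are not close.

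\emph{Step 1 (CGO solutions).} Fix $\rho=|k|(i\be_1+\be_2)$, so $\rho\cdot\rho=0$, $k=|k|\be_1$, $k^\perp=|k|\be_2$. With $v=\sqrt\sigma\,u$, equation \eqref{eq:elliptic} becomes a Schr\"odinger equation with potential $q=\sigma^{-1/2}\Delta\sqrt\sigma$, and the Faddeev/Sylvester--Uhlmann construction gives a solution $u_\rho=\sigma^{-1/2}e^{\rho\cdot x}(1+\psi_\rho)$ on $\Rm^n$ with $\|\psi_\rho\|_{C^1(\overline X)}\le C(M)|k|^{-1}$; this is exactly where $\sigma-1\in H^{\frac n2+3+\eps}$ enters, since it controls $q$ in $H^{\frac n2+1+\eps}$ and hence $\psi_\rho$ in $H^{\frac n2+2+\eps}\hookrightarrow C^1$. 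Set $u_1=\Re u_\rho$, $u_2=\Im u_\rho$, $f_i=u_i|_{\partial X}$, $j_i=\partial_\nu u_i|_{\partial X}$, and similarly $\tilde u_i$ for $\tilde\sigma$. Since $\Re(\rho e^{\rho\cdot x})=|k|e^{k^\perp\cdot x}\theta^\perp(x)$ and $\Im(\rho e^{\rho\cdot x})=|k|e^{k^\perp\cdot x}\theta(x)$ with $\theta,\theta^\perp$ as in \eqref{eq:spectheta}, one gets $\widehat{\nabla u_1}=\theta^\perp(x)+O(C(M)|k|^{-1})$ and $\widehat{\nabla u_2}=\theta(x)+O(C(M)|k|^{-1})$ in $C^0(\overline X)$, with the remainder uniform over $\sigma$ in the $M$-ball: the $\sigma$-dependence of $\nabla u_i$ sits in the positive scalar $\sigma^{-1/2}$ and in the $O(|k|^{-1})$ correction, not in the leading direction. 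Moreover $|\nabla u_i|$ is bounded above and below on $\overline X$.

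\emph{Step 2 (thin slabs and admissible combinations).} The frame $(\theta^\perp(x),\theta(x))$ is the rotation by $|k|x_1$ of $(\be_2,\be_1)$, so over $x_1\in(0,1)$ it sweeps every direction of the $(\be_1,\be_2)$-plane and no fixed combination of $u_1,u_2$ has its gradient near $\be_1$ throughout $X$. Choose $|k|\ge k_0(M)$ and partition $(0,1)$ into $N\simeq|k|$ overlapping intervals $I_j$ of length $\le\delta$ with $|k|\delta$ below a fixed small constant. On $S_j=I_j\times B_{n-1}(a)$ pick a unit vector $c^{(j)}$ with $c_1^{(j)}\theta^\perp(x_1^{(j)})+c_2^{(j)}\theta(x_1^{(j)})=\pm\be_1$ at the midpoint $x_1^{(j)}$ of $I_j$, and put $w_j=c_1^{(j)}u_1+c_2^{(j)}u_2$, $\tilde w_j=c_1^{(j)}\tilde u_1+c_2^{(j)}\tilde u_2$. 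Then $\widehat{\nabla w_j}$ and $\widehat{\nabla\tilde w_j}$ stay within angle $|k|\delta/2+C(M)|k|^{-1}<\pi/4-\eta_0$ of $\pm\be_1$ on all of $S_j$, uniformly over $\sigma,\tilde\sigma$ in the $M$-ball. Hence the matrix $\mg$ of \eqref{eq:mg} built from $(\nabla w_j,\nabla\tilde w_j)$ is strictly hyperbolic on $S_j$, the slices $\{x_1=c\}$ are space-like for it, the margin $\theta$ of \eqref{eq:theta0} with $\nu_2=\be_1$ is bounded below by some $\theta_0(M)>0$, and $\alpha,\beta$ of \eqref{eq:bebeta} are bounded above and below in terms of $M$ only (here $\beta^2\simeq\tfrac12(\sqrt{\tilde\sigma/\sigma}+\sqrt{\sigma/\tilde\sigma})$ is controlled by the $L^\infty$ bounds on the conductivities; note $\nabla w_j\cdot\nabla\tilde w_j>0$ because the two directions are within $O(|k|^{-1})$ of each other). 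Finally $\sigma|\nabla w_j|^2$ is the fixed quadratic $(c_1^{(j)})^2H_{11}+(c_2^{(j)})^2H_{22}+c_1^{(j)}c_2^{(j)}(H_{12}-H_{11}-H_{22})$ in the data (equivalently, the data determine $\sigma\nabla u_1\cdot\nabla u_2$), so $w_j$ solves a $0$-Laplacian Cauchy problem with known internal source and known Cauchy data at $x_1=0$ coming from $d_C$.

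\emph{Step 3 (propagation, uniqueness, stability, openness).} On $S_0$ one has $c^{(0)}=(0,1)$, $w_0=u_2$, and the face $\{x_1=0\}\times B_{n-1}(a)$ lies in $\Sigma_g$ for the $w_0$-metric; applying the energy estimate underlying Theorem \ref{thm:LinearStab} to $\tilde w_0-w_0$ controls $\|\tilde u_2-u_2\|_{H^1}$ on the domain of influence of that face inside $S_0$ by $\|d_C-\tilde d_C\|_{L^2(B_{n-1}(a))}$ and $\sum_I\|\nabla H_{ij}-\nabla\tilde H_{ij}\|_{L^2(X)}$. To pass to $S_1$ I run the estimate on $S_0$ for \emph{two} linearly independent admissible combinations (near $x_1=0$ there is an open cone of combinations with gradient within $\pi/4$ of $\be_1$), which recovers the full pair $(\tilde u_1-u_1,\tilde u_2-u_2)$ and hence Cauchy data for the next slab; iterating over $S_1,S_2,\dots$, and using that for $|k|\ge k_0(M)$ each slab metric is within $O(|k|^{-1})$ of the Euclidean Lorentzian metric $2\be_1\otimes\be_1-I$ (up to the scalar $\beta$), the chained domains of influence cover $(0,1)\times B_{n-1}(a-c(M))$, which contains $\overline{\tilde X}$ by the hypothesis on $a$. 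Summing the $N=N(M)$ slab estimates and writing $\sigma-\tilde\sigma=H_{11}|\nabla u_1|^{-2}-\tilde H_{11}|\nabla\tilde u_1|^{-2}$ (with the gradients bounded below) gives \eqref{eq:globalstab}; taking $d_C=\tilde d_C$ and $H_{ij}=\tilde H_{ij}$ forces $u_i=\tilde u_i$ on $X$, hence $\sigma=\tilde\sigma$, which is the uniqueness claim. All conditions used — $|\nabla u_i|$ bounded below, each $\widehat{\nabla w_j}$ in a closed cone around $\pm\be_1$ on $S_j$, the chained coverage of $\overline{\tilde X}$ — are strict and uniform, hence stable under $W^{1,\infty}(X)$-small perturbations of $(f_1,f_2)$ (which by elliptic stability are induced by $C^2(\partial X)$-small perturbations of the boundary data), so the set of admissible $(f_1,f_2)$ is open and contains the CGO traces.

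\emph{Main obstacle.} The delicate point is the uniformity over the $M$-ball: one must guarantee that the difference metric stays strictly hyperbolic, with slices space-like and with margins depending only on $M$, \emph{even when $\sigma$ and $\tilde\sigma$ are far apart} — this is precisely what the CGO construction buys, pinning $\widehat{\nabla u_i}$ and $\widehat{\nabla\tilde u_i}$ to the common, $\sigma$-independent rotating frame up to $O(|k|^{-1})$ — and then the bookkeeping of chaining the slab-wise domains of influence across $O(|k|)$ thin slabs so that they cover $\tilde X$, transferring Cauchy data at each interface via two independent admissible combinations. The quantitative CGO estimate in the regularity class $H^{\frac n2+3+\eps}$ is classical but must be invoked with the explicit $|k|^{-1}$ gain.
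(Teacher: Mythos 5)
Your proposal is correct and follows essentially the same route as the paper: the same CGO solutions $u_\rho=\sigma^{-1/2}e^{\rho\cdot x}(1+\psi_\rho)$ with the $O(|k|^{-1})$ gain pinning $\widehat{\nabla u_i}$ to the $\sigma$-independent rotating frame, a slab decomposition in $x_1$ with per-slab linear combinations whose gradients stay near time-like directions (the paper's $\bp,\bq$), polarization of the three measurements to obtain the internal data for those combinations, slab-wise energy estimates with Cauchy data transferred across interfaces by inverting a well-conditioned $2\times2$ system, and openness of the admissible illuminations by the uniform strictness of the light-cone conditions. The only differences are minor bookkeeping choices (e.g.\ your slab width $\delta\sim|k|^{-1}$ versus the paper's $t_1\sim M|k|^{-2}$), not a different argument.
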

%%%%%%%%%%%%%%%%%%%%%%%
%  PROOF
%%%%%%%%%%%%%%%%%%%%%%%
\begin{proof}
  We recall that $k=|k|\be_1$ and $k^\perp=|k|\be_2$.
 The proof is performed iteratively on layers $t_{i-1}\leq x_1\leq t_i$ with $t_i=\frac iN$  for $0\leq i\leq N$ and $N=N(k)$ (to be determined) sufficiently large but finite for any given sufficiently smooth conductivity $\sigma(x)$. Here, $k=|k|\be_1$ is the vector in $\Rm^n$ used for the constructions of the CGO solutions. We define $y_i=(t_i,0,\ldots,0)$ for $0\leq i\leq N$.  Define two vectors close to $\be_1$ as 
   \begin{displaymath} 
  \bp=w\be_1+\sqrt{1-w^2}\be_2,\quad \bq=w\be_1-\sqrt{1-w^2}\be_2,
 \end{displaymath}
with $w<1$ sufficiently close to $1$ such that the light cones (for the Euclidean metric) emerging from $B_{n-1}(a)$ for the Lorentzian metric with main directions $\bp$ and $\bq$ still strictly include $\tilde X$; see Fig. \ref{fig:CGO}. 
%In fact, we need a little more wiggle room. Let $\tilde w=2w-1$, which is farther from $1$ than $w$. Then we assume that the vectors $\tilde\bp$ and $\tilde\bq$ constructed from $\tilde w$ satisfy the same light cone property. 
All we need is that the radius $a$ be chosen sufficiently large so that any Lorentzian metric with direction close to $\be_1$, $\bp$, or $\bq$, has a light cone emerging from $B_{n-1}(a)$ that includes $\tilde X$. This means that any time-like trajectory (geodesic) from a point in $\tilde X$ crosses $B_{n-1}(a)$ for all metrics with direction close to $\be_1$, $\bp$ or $\bq$. See Fig. \ref{fig:CGO} where the light cone for $\bp$ is shown to strictly include $\tilde X$.
\begin{figure}[ht]
\begin{center}
\includegraphics[width=12cm]{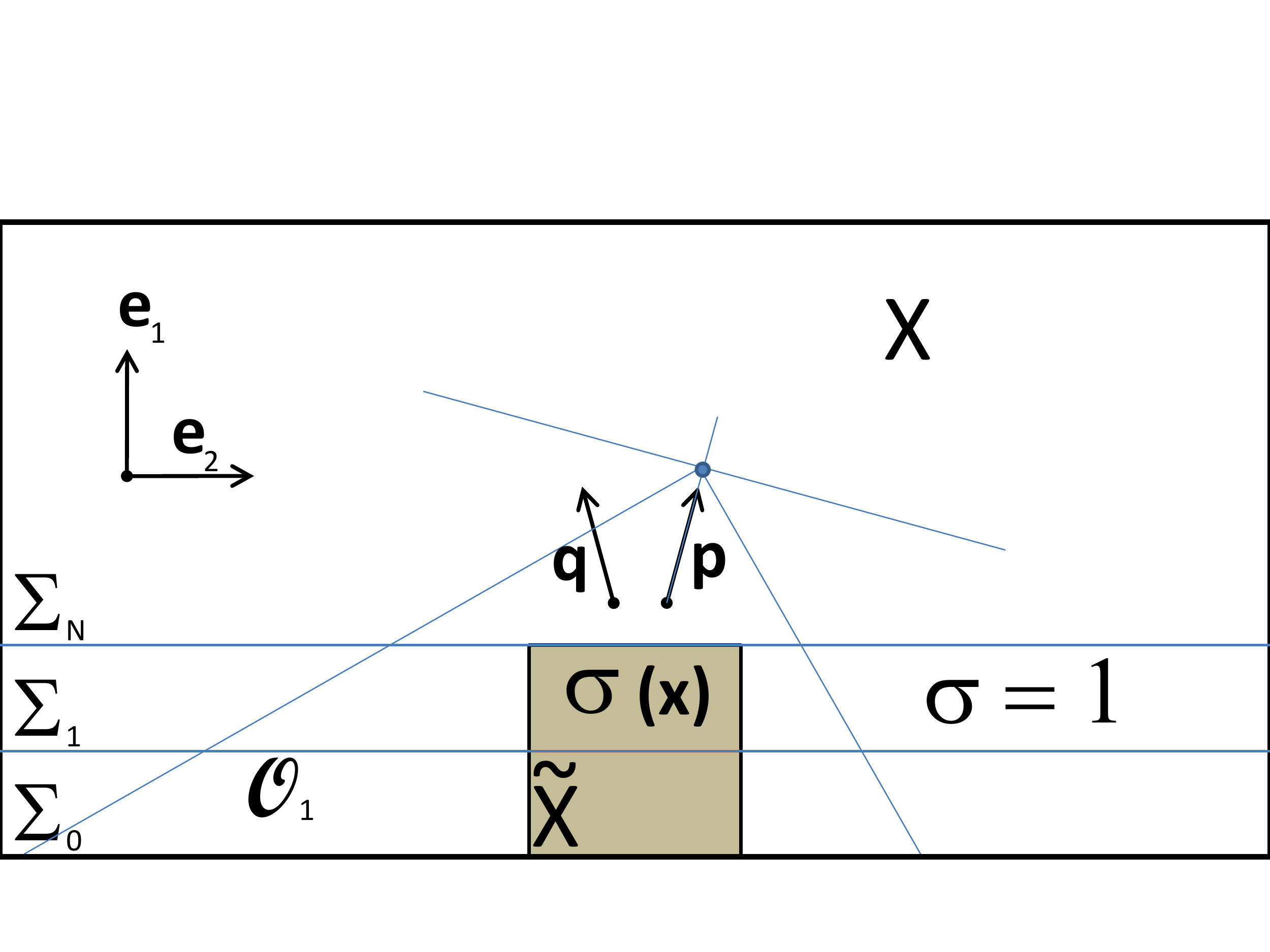}
\end{center}
\caption{Extended Geometry where complex geometric solutions are constructed.}
\label{fig:CGO}
\end{figure}
 
 Now consider the slab $t_0<x_1<t_1$. We prove a result on that slab and show that the Cauchy data at $t_1$ are controlled so that the same estimate may be used on $t_1<x_1<t_2$ and on all of $(0,1)$ by induction.  Let $\alpha_1$ and $\beta_1$ be the two angles in $(0,2\pi)$ such that 
 \begin{displaymath} 
  \cos\alpha_1 \theta(y_0) + \sin\alpha_1\theta^\perp(y_0)=\bp, \quad \cos\beta_1 \theta(y_0) + \sin\beta_1\theta^\perp(y_0)=\bq,
\end{displaymath}
where $\theta(x)\in\Sm^{n-1}$ is defined in \eqref{eq:spectheta}.

The complex geometric optics solutions are constructed as follows. We define $v=\Im e^{\rho\cdot x}$ and $w=\Re e^{\rho\cdot x}$ harmonic functions. Then we find that 
 \begin{displaymath} 
   \nabla v = e^{k^\perp\cdot x} |k| \theta(x), \quad \nabla w = e^{k^\perp\cdot x} |k| \theta^\perp(x),
 \end{displaymath}
so that for the two harmonic functions $v_1=\cos\alpha_1 v+\sin\alpha_1 w$ and $w_1=\cos\beta_1 v+\sin\beta_1 w$, we have on the slab $0<x_1<t_1$ that 
\begin{displaymath} \begin{array}{rclcl}
   \widehat{\nabla  v_1} &=&\cos\alpha_1 \theta(x)+\sin\alpha_1 \theta^\perp(x)  &=& \bp+O\Big(\dfrac{|k|}{N}\Big),\\[2mm]
   \widehat{\nabla  w_1} &=&\cos\beta_1 \theta(x)+\sin\beta_1 \theta^\perp(x)  &=& \bq+O\Big(\dfrac{|k|}{N}\Big).
   \end{array}
\end{displaymath}
For $t_1=\frac1N$ such that $|k|t_1=\frac{|k|}{N}$ is sufficiently small, $\widehat{\nabla  v_1}$ and $\widehat{\nabla  w_1}$ for all $x$ such that $0<x_1<t_1$ are two vector fields such that the associated Lorentzian metrics $\mh_{\widehat{\nabla  v_1}}$ and $\mh_{\widehat{\nabla  w_1}}$ have $\be_1$ as a time-like vector.

Let us now assume that $\sigma$ is arbitrary but smooth. The main idea of CGO solutions is that we can construct solutions for arbitrary $\sigma$ that are close to the solutions corresponding to $\sigma=1$ for $|k|$ sufficiently large. We construct CGO solutions $u_\rho$ of \eqref{eq:elliptic} (and $\tilde u_\rho$ by replacing $\sigma$ by $\tilde\sigma$) such that 
\begin{displaymath} 
    u_\rho = \dfrac{1}{\sqrt\sigma} e^{\rho\cdot x}(1+\psi_\rho),
 \end{displaymath}
with $|k|\psi_\rho$ bounded in the $C^1$ norm since $\sigma$ is sufficiently smooth by hypothesis. This result is proved in  \cite{BRUZ-IP-11} following earlier work in \cite{BU-IP-10}. These solutions are constructed on $\Rm^n$ and then restricted to $X$; their boundary condition $f_\rho$ is therefore specified by the construction. For such a solution, we find that
\begin{displaymath} 
  \nabla u_\rho = \dfrac{1}{\sqrt\sigma}e^{\rho\cdot x}|\rho| \big(\hat\rho + \phi_\rho\big),
 \end{displaymath}
where $|k||\phi_\rho|$ is also bounded in the uniform norm. 
 This shows that 
 \begin{displaymath} 
   \widehat {\nabla \Im u_\rho}(x) = \theta(x) + \phi_{\rho,i},\qquad \widehat {\nabla \Re u_\rho}(x) = \theta(x) + \phi_{\rho,r}
 \end{displaymath}
with $|k||\phi_{\rho,i}|$ and $|k||\phi_{\rho,r}|$ bounded in the uniform norm. As a consequence, we have constructed solutions of \eqref{eq:elliptic} with a gradient that is close to the prescribed $\theta(x)$ corresponding to harmonic functions.
 Construct now the two linear combinations
\begin{equation} \label{eq:vw1rho} 
   v_{1,\rho}=\cos\alpha_1 v_\rho+\sin\alpha_1 w_\rho,\,\,\, w_{1,\rho}=\cos\beta_1 v_\rho+\sin\beta_1 w_\rho,\quad v_\rho := \Im u_\rho,\,\,  w_\rho := \Re u_\rho.
\end{equation}
Knowledge of the Cauchy data for $v_{1,\rho}$ and $w_{1,\rho}$ is inherited from that for $v_\rho$ and $w_\rho$. Define $\tilde v_{1,\rho}$ and $\tilde w_{1,\rho}$ similarly with $\sigma$ replaced by $\tilde\sigma$. We choose $|k|$ sufficiently large and then $t_1|k|$ sufficiently small so that $\phi_\rho$ is a negligible vector that does not perturb the Lorentzian metric much and so that 
\begin{equation}\label{eq:errors} 
\widehat{\nabla v}_{1,\rho}=\bp+O(|k|t_1) +O(M|k|^{-1})\quad \mbox{ and } \quad \widehat {\nabla w}_{1,\rho}=\bq+O(|k|t_1)+O(M|k|^{-1})
 \end{equation}
 are directions of Lorentzian metrics for which (i) $\be_1$ is a time-like vector; and (ii) the light cone emerging from $B_{n-1}(a)$ includes $\tilde X$. Here, $M$ is the uniform bound of $\sigma$ in $H^{\frac n2+3+\eps}(\Rm^n)$ \cite{BRUZ-IP-11,BU-IP-10}. Note that this means that $t_1$ should be chosen on the order of $M|k|^{-2}$ once $|k|$ has been chosen so that $M|k|^{-1}$ is sufficiently small.

The same properties hold for the vectors constructed by replacing $\sigma$ by $\tilde\sigma$. Thus, the metric $\mg$ in \eqref{eq:mg} is given with $\alpha$ and $\beta$ close to $1$ and $\be(x)$ close to $\bp$ for the function $v_{1,\rho}$ and close to $\bq$ for the function $w_{1,\rho}$. Using Cauchy data on $\Sigma_0:=\{x_1=0\}$, we can then solve the linear equations on the slab $\cO_1:=\{0=t_0<x_1<t_1\}$ and get the solution at the surface $\Sigma_1:=\{x_1=t_1\}$. 
For the solutions $v_{1,\rho}$ and $w_{1,\rho}$, we obtain as a slight modification of \eqref{eq:localstab} \cite{Taylor-PDE-1} the stability result:
 \begin{equation}
  \label{eq:slabstab}
   \Big( \dint_{\Sigma_1} |f-\tilde f|^2 + |j-\tilde j|^2 \,d\sigma+ \dint_{\cO_1} E(dv) dx \leq C \Big( \dint_{\Sigma_0} |f-\tilde f|^2 + |j-\tilde j|^2 \,d\sigma + \dint_{\cO_1} |\nabla \delta H|^2 \,dx\Big).
\end{equation}
The above measurements $\delta H=H-\tilde H$ are those for the functions $(v_{1,\rho},\tilde v_{1,\rho})$ and  $(w_{1,\rho},\tilde w_{1,\rho})$. Such measurements can be constructed from the three measurements for $v_\rho$, $w_\rho$ and $v_\rho+w_\rho$. This is the place where we use the three measurements stated in the theorem: we need to ensure that $\sigma(x)|\mu \nabla v_\rho+\nu \nabla w_\rho|^2$ is available for any possible linear combination $(\mu,\nu)$ since the values of $\alpha_1$ and $\beta_1$ will vary (and will be called $\alpha_i$ and $\beta_i)$ on each slab $t_i<x_i<t_{i+1}$. Since the measurements $H$ for the $0-$Laplacian problem are {\em quadratic} in the elliptic solution, three measurements are sufficient by polarization to allow us to construct $\sigma(x)|\nabla v_{1,\rho}|^2$ and $\sigma(x)|\nabla w_{1,\rho}|^2$.

On $\Sigma_1$, we have control on the Cauchy data of $v_{1,\rho}$ and $w_{1,\rho}$ and hence of  $v_\rho=\Im u_\rho$ and $w_\rho=\Re u_\rho$ thanks to \eqref{eq:slabstab} and \eqref{eq:vw1rho}. Here, we need that $\bp$ and $\bq$ be not too close to one-another (this is guaranteed by $w<1$) so that the inversion of the $2\times2$ system is well-conditioned. On each slab, we define the angles $\alpha_i$ and $\beta_i$ in order again to have Lorentzian metrics with directions close to $\bp$ and $\bq$. We then obtain a similar estimate to \eqref{eq:slabstab} and pursue by induction until we reach the slab $\cO_N:=\{t_{N-1}<x_1<t_N=1\}$. %Note that $N$ is chosen sufficiently large as a function of the a priori bound $M$ of $\sigma$ in $H^{\frac n2+3+\eps}(\Rm^n)$ (which dictates how $|k|$ should be chosen) and of the radius $a$. If $a$ is chosen sufficiently large, then $N$ will depend only on $M$ as can easily be verified.

The stability results then apply to $\Im u_\rho$ and $\Re u_\rho$, and we thus obtain a global estimate for $\sigma$ as in earlier sections. So far, the illuminations $f$ prescribed on $X$ to solve the elliptic problem are of a very specific type. In order for $\Im u_\rho$ and $\Re u_\rho$ to be the solutions to the elliptic problems on $X$, then $(f_1,f_2)$ needs to be the trace of $(\Im u_\rho,\Re u_\rho)$ on $\partial X$. It is for these illuminations that the three measurements $H_{ij}(x)$ for $(i,j)\in I$ generate Lorentzian metrics that satisfy the above sufficient properties. Since $\sigma$ is not known, these traces are not known either.

However, any Lorentzian metric that is sufficiently close to the Lorentzian metrics constructed with the real and imaginary parts of $u_\rho$ will inherit the same light cone properties and, in particular, the fact that $\be_1$ is a time-like vector for these new Lorentzian metrics throughout $X=(0,1)\times B_{n-1}(a)$. Therefore, there is an open set of boundary conditions $(f_1,f_2)$ close to $(\Im u_\rho|_{\partial X},\Re u_\rho|_{\partial X})$  such that the conclusion \eqref{eq:slabstab} holds, as well as the same expressions on the other slabs $\cO_i$.
This concludes the proof of the result.
\end{proof}
\begin{remark}\rm
  \label{rem:cgo0}  The ``three'' measurements $H_{ij}$ for $(i,j)\in I$ in \eqref{eq:threemeas} actually correspond to ``two'' physical measurements. Indeed, we can replace $u_{\eps}$ by $u_{1;\eps}$ and $u_{-\eps}$ by $u_{2;-\eps}$  in \eqref{eq:Green} and obtain in the limit $\sigma \nabla u_1\cdot\nabla u_2$, which combined with $H_{11}$ and $H_{22}$ yields $H_{12}$ defined in \eqref{eq:threemeas}. The experimental acquisition of $H_{11}$ is in fact sufficient to also acquire $\sigma \nabla u_1\cdot\nabla u_2$ as demonstrated in \cite{KK-AET-11}. 
\end{remark}
\begin{remark}\rm
  \label{rem:cgo1} The above theorem shows a uniqueness and stability result for arbitrary, sufficiently smooth, conductivities. However, the boundary conditions $f$ are quite specific since they need to be sufficiently close to non-explicit, $\sigma$-dependent, traces of complex geometrical optics solutions.   In some sense, the difficulty inherent to the spatially varying Lorentzian metric $\mh(x)$ in \eqref{eq:mh} has been shifted to the difficulty of constructing adapted boundary conditions (illuminations).
  
Note that the condition of flatness of the surfaces $\Sigma_i$ in the above construction is not essential. Surfaces with a geometry such as that depicted in Fig. \ref{fig:localconst} may also be considered. Such surfaces allow us to reduce the size of the domain $X$ on which the conductivity $\sigma=1$ needs to be extended. Unless the domain $X$ has a specific geometry similar to that of the domain $\cO$ between $\Sigma_1$ and $\Sigma_2$ in Fig.  \ref{fig:localconst}, it seems necessary to augment the size of $\tilde X$ to that of $X$ as described above to obtain a global uniqueness result.
\end{remark}

%
%%
%%%%%%%%%%%%%%%%%
\section*{Acknowledgment} Partial support from the National Science Foundation is greatly acknowledged.
 
%%%%%%%%%%%%%%%%%
{\small

%\bibliography{../../bibliography} 
%\bibliographystyle{siam}
}

\end{document}